\documentclass[12pt,oneside]{amsart}
\usepackage{etnc-arxiv}

\def\marginpar#1{}
\renewcommand{\showlabelsetlabel}[1]{}

\begin{document}

\title{Tamagawa number formula for Jacobians}
\author{Tim Dokchitser}
\address{Department of Mathematics, University Walk, Bristol BS8 1TW, UK}
\email{tim.dokchitser@bristol.ac.uk}
\subjclass{Primary 11G20; Secondary~05C50,14H40,14G20,14H25,14K15}
\keywords{Component group, Jacobian, Tamagawa number, arithmetic surfaces, dual graph, weighted graph, flow-cut decomposition}

\begin{abstract}
We give a product formula for the Tamagawa numbers of Jacobians over a discrete valuation
field with perfect residue field $k$. It comes as a product of four terms --- unipotent, toric, arithmetic
and (somewhat intricate) cohomological. It is proved by
(1) extending the classical flow-cut construction from semistable to arbitrary curves, which, by Raynaud's
results, gives the formula when $k$ is algebraically closed; (2) observing that this formula respects the natural
metric on the edges of the dual graph, which allows to quotient out the Galois action; (3) extending
Bosch--Liu's description of the cohomological term when $k$ is finite. In particular, this answers a question
of Bosch--Liu, and gives an alternative description of Poonen--Stoll's cohomological obstruction in
terms of characters.
\end{abstract}

\maketitle

\bigskip

\tableofcontents

\newpage

\section{Introduction}
\label{sintro}

Let $K$ be a discrete valuation field with ring of integers $\cO_K$ and perfect residue
field $k$.

Let $C/K$ be a (smooth projective geometrically connected) curve
with Jacobian $J/K$, and let $\J/\cO_K$ be the N\'eron
model of $J$. The \emph{component group} of $J$ is the quotient
$
  \phi_J \>=\> \J_k / \J_k^0
$
of the special fibre $\J_k$ by its identity component $\J_k^0$. It is a finite
\'etale group scheme over $k$, and we are interested in the number of its $k$-points:
$$
  \text{Tamagawa number} = |\phi_J(k)|.
$$
Following Raynaud \cite[9.6.1]{BLR} and Bosch--Liu \cite[\S1]{BL}, consider

\smallskip
\begin{tabular}{lll@{\qquad}l}
$\cC$    &=\!\!& a flat proper regular model of $C/K$ over $\cO_K$, with special fibre $\cC_k$,\\
$V$      &=\!\!& set of irreducible components $v$ of $\cC_{\bar k}$ (geometric components),\\
$v\cdot w$ &=\!\!& intersection pairing on $\cC_{\bar k}$ for $v,w\in V$ (self-intersection if $v=w$),\\
$\D$     &=\!\!& $(V,E)$ dual graph of the geometric special fibre with vertex set $V$, and\\
        &&  $v\cdot w$ edges between $v$ and $w$ for $v\ne w$ and no loops,\\
$\mathbf{m}=(m_v)\!\!$ &=\!& normalised multiplicities, so
  $\cC_{\bar k} \!=\! c\sum_{v\in V} m_v\, v$ with $c\!\ge\!1$ and $\gcd(\mathbf{m})\!=\!1$.\\
\end{tabular}

\smallskip
\noindent
The absolute Galois group $G=\Gal(\bar k/k)$ acts continuously on $V$, and,
blowing up intersection points if necessary, we assume that%
\footnote{(a) is automatic for a model with normal crossings; in fact, the action on $V$
  determines the component group; the action on $E$ only needs to exist. (b)
  is called acting without inversion, see Serre \cite[\S3.1]{Trees}.}
\begin{itemize}
\item[(a)] $v\cap w$ is reduced for all $v\ne w$
  (so the action of $G$ on $E$ is well-defined) and
\item[(b)] no element of $G$ swaps the two endpoints of any edge of $\D$.
\end{itemize}
Under these conditions we can define the quotient graph:
\smallskip

\begin{tabular}{lll@{\qquad}l}
$\DG$    &=& $(\VG,\EG)$ quotient graph, $\VG=V/G$, $\EG=E/G$ sets of orbits,\\
$r_v, r_e$ &=& orbit sizes for $v\in \VG, e\in \EG$,\\
$v_0$    &=& fixed choice of a representative in $V$ for every orbit $v\in \VG=V/G$,\\
$m_v$    &=& $m_{v_0}$ normalised multiplicity, constant on $G$-orbits,\\
$\deg v$ &=& degree of a vertex $v\in \VG$ in $\DG$, edges from $v$ to itself counted twice.\\
\end{tabular}

\begin{definition}[Length pairing]
Define a \emph{length pairing} on $\Z^{\EG}$ by
$$
  \lara: \Z^{\EG}\times \Z^{\EG} \to \Q, \qquad \langle e,e'\rangle =
    \bigleftchoice{1/(r_e m_v m_w)}{\text{if $e=e'$ is an edge $v$ to $w$ ($v,w\in \VG$),}}{0}{\text{if }e\ne e'.}
$$
Write $H_1(\DG,\Z)\subset C_1(\DG,\Z)=\Z^{\EG}$ for the first homology group of $\DG$ as a topological space.
\end{definition}

\begin{definition}[Cohomological term]
\label{defpsi}
A component orbit $v\in \VG$ is \emph{extremal} if $v_0\cdot v_0$ is odd,
$m_v$ is even and $r_v m_v/\gcd_{w\in \VG}(r_w m_w)$ is odd.
Let $\Gvee=\Hom(G,\Q/\Z)$ be the group of continuous characters of $G$, and write
$$
  \Psi_J=\Bigl\{\chi\in\Gvee \Bigm| \text{all }[\chi]_v \in \Z \text{ and }
    \sum_{v\in \VG\text{ extremal}}[\chi]_v \in 2\Z \Bigr\}
       \quad\text{where}\>\>\>\>
  [\chi]_v = \frac{m_v}{\order(\chi|_{\Stab_G(v_0)})}.
$$
\end{definition}

\namedthm{A}[Tamagawa number formula]
\label{etnc}
Let $K$ be a discrete valuation field with perfect residue field $k$, let
$C/K$ be a smooth projective geometrically connected curve with Jacobian~$J/K$,
and let $\cC/\cO_K$ be any flat proper regular model of $C$ satisfying
conditions~\textup{(a)} and~\textup{(b)}.
In the above notation, the Tamagawa number of $J/K$ is
$$
  |\phi_J(k)| \>=\>
    \underbrace{
      \prod_{v\in \VG} m_v^{\deg v - 2}
    \vphantom{\rule[-20pt]{0pt}{28pt}}
    }_{\substack{\text{\sf unipotent part} \\[2pt] \text{\sf $=\!1$ if $C/K$ semistable}}}
    \cdot
    \underbrace{
      \det\bigl(\langle\cdot,\cdot\rangle\big|_{H_1(\DG,\Z)}\bigr)
    \vphantom{\rule[-20pt]{0pt}{28pt}}
    }_{\substack{\text{\sf toric part} \\[2pt] \text{\sf $=\!1$ if $J/K$ has no toric part}}}
    \cdot
    \underbrace{
      \vphantom{\rule[-20pt]{0pt}{28pt}}
      \frac{\prod_{e\in \EG} r_e}{\prod_{v\in \VG} r_v}
    }_{\substack{\text{\sf arithmetic part} \\[2pt] \text{\sf $=\!1$ if $k=\bar{k}$}}}
    \cdot
    \underbrace{
      \gcd_{v\in \VG} (r_v m_v)
      \cdot
      |\Psi_J|\rlap{\ .}
    \vphantom{\rule[-20pt]{0pt}{28pt}}
    }_{\substack{\text{\sf cohomological part} \\[2pt] \text{\sf $=\!1$ if $C(K)\ne\emptyset$}}}
$$
In particular, the right-hand side is an integer, independent of the choice of a regular model.
\endnamedthm

\begin{example}[I$_n$]
Let $C/K$ be an elliptic curve with \emph{non-split} multiplicative
reduction. Its minimal and minimal normal crossings models coincide, and consist of an $n$-gon of $\P^1$s
(for $n=1$ one $\P^1$ with a self-intersection). The dual graph is an $n$-gon:
\begin{center}
\begin{tikzpicture}[
    v/.style = {circle, fill=black, inner sep=0pt, minimum size=4pt},
    e/.style = {draw=black},
    l/.style = {inner sep=4pt}
  ]
  \draw[dashed] (-1,2)--(-0.5,2) (0.2,2)--(1.8,2) (2.5,2)--(5.5,2) (6.2,2)--(10.5,2) (11.2,2)--(12.8,2) (13.5,2)--(14.3,2);
  \hexagon{1,1,1,1,1,1}{1}{2}{30pt}        
  \pentagon{1,1,1,1,1}{7}{2}{30pt}         
  \hexagon{2,1,1,1,1,1}{12}{2}{30pt}       
  \node[align=center] at (1,-0.2) {Non-split I$_n$ ($n$ even)\\minimal model};
  \node[align=center] at (7,-0.2) {Non-split I$_n$ ($n$ odd)\\minimal model};
  \node[align=center] at (12,-0.2) {Non-split I$_n$ ($n$ odd)\\model satisfying (a)+(b)};
\end{tikzpicture}
\end{center}
The Galois group acts as a reflection, fixing the identity component (leftmost component in every picture).
When $n$ is odd, it inverts an edge (middle picture), and blowing up the corresponding intersection point
gives an extra component of multiplicity 2 (right picture).
The left and right models satisfy (a)+(b), and the quotient graph $\DG=\D/G$ is
\begin{center}
\begin{tikzpicture}[
    xscale = 2.5, v/.style = {circle, fill=black, inner sep=0pt, minimum size=4pt},
    e/.style = {draw=black}, l/.style = {inner sep=3pt, scale=0.8, blue}, m/.style = {below,inner sep=6pt}]
  \draw[e] (0,0) -- node[l, above] {$r_e\!=\!2$} (1,0)                                       
     -- node[l,above] {$r_e\!=\!2$} (2,0) -- node[l,above] {$r_e\!=\!2$} (3,0);
  \node[v] at (0,0) {}; \node[v] at (1,0) {}; \node[v] at (2,0) {}; \node[v] at (3,0) {};   
  \node[m] at (0,0) {1}; \node[m] at (1,0) {1}; \node[m] at (2,0) {1};                      
  \node[m, align=left, anchor=north west] at (2.87,0.05) {1 ($n$ even)\\[-1pt]2 ($n$ odd)};
  \node[l,above] at (0,0) {$r_v\!=\!1$}; \node[l,above] at (1,0) {$r_v\!=\!2$};             
  \node[l,above] at (2,0) {$r_v\!=\!2$}; \node[l,above] at (3,0) {$r_v\!=\!1$};
  \node at (1.5,-1.3) {Quotient graph $\DG$ for an elliptic curve with non-split multiplicative reduction.};
\end{tikzpicture}
\end{center}
The terms in the theorem are (with trivial $\Psi_J$ as $E(K)\ne\emptyset$)
$$
  \prod_{v\in \VG} m_v^{\deg v - 2} =
    \begin{cases} 1 & n \text{ even} \\ 1/2 & n \text{ odd}, \end{cases} \qquad
  \det\bigl(\langle\cdot,\cdot\rangle\big|_{H_1(\DG,\Z)}\bigr) = 1, \qquad
  \frac{\prod_{e\in \EG} r_e}{\prod_{v\in \VG} r_v} = 2,
$$
multiplying to $|\phi_J(k)|=2$ for $n$ even and $|\phi_J(k)|=1$ for $n$ odd.
If $C$ has \emph{split} multiplicative reduction of type I$_{n}$, then
$\D$ is an $n$-gon with trivial $G$-action, and
$$
  \prod_{v\in \VG} m_v^{\deg v - 2} = 1, \qquad
  \det\bigl(\langle\cdot,\cdot\rangle\big|_{H_1(\DG,\Z)}\bigr) = n, \qquad
  \frac{\prod_{e\in \EG} r_e}{\prod_{v\in \VG} r_v} = 1, \qquad |\phi_J(k)|=n.
$$
\end{example}

\section{History and overview}
\label{shistory}

Tamagawa numbers for abelian varieties were defined by Tate \cite{TatC}, as
the correct `fudge factors' for the Birch--Swinnerton-Dyer
conjecture, and he classified them for elliptic curves in \cite{TatA}.
In higher genus, the question of understanding component groups for Jacobians has a rich history,
all rooted in Raynaud's theorem \cite[\S8]{Ray}:

\subsection{Geometric case}

Suppose $k=\bar k$.
Following Raynaud, consider the maps

\begin{equation}
\label{eqnraynaud}
\begin{tikzpicture}[scale=1.2, commutative diagrams/every diagram, inj/.style={{Hooks[right]}->}, surj/.style={->>}]
\node (ZV1)  at (0,0) {$\Z^{V}$};
\node (ZV2)  at (4,0) {$\Z^{V}$};
\node (Z)    at (7,0) {$\Z$.};
\path[commutative diagrams/.cd, every arrow, every label]
(ZV1) edge node[swap] {$v\mapsto\sum_{w} (v\cdot w)w$} node {$\alpha$} (ZV2)  
(ZV2) edge node[swap] {$v\mapsto m_v$} node {$\beta$} (Z);     
\end{tikzpicture}
\end{equation}

\noindent
The fibre relation $\cC_k\cdot\text{anything}=0$ implies $\im\alpha\subseteq\ker\beta$, and
there is a canonical short exact sequence of $G$-modules%
\footnote{Raynaud uses actual (unnormalised) multiplicities
rather than $\mathbf{m}$, but this does not change $\ker\beta$.}
(Raynaud {\cite[9.6.1]{BLR}}, Bosch--Liu {\cite[Thm.~1.1, Cor.~1.10]{BL}})
\begin{equation}\label{raynaudeq}
  0 \longrightarrow \im\alpha \longrightarrow \ker\beta
    \longrightarrow \phi_J(\bar k) \longrightarrow 0.
\end{equation}
So the component group is an invariant of the intersection matrix;
geometric genera of components (`abelian part') play no role, and understanding
$$
  \phi_J(\bar k)=\frac{\ker\beta}{\im\alpha}=(\coker \alpha)_{\tors}
$$
reduces to linear algebra.

In the purely unipotent case, when the dual graph $\D$ is a tree,
Theorem \ref{etnc} reduces to the elegant formula of Lorenzini \cite[Cor.~2.3]{Lo}, \cite[Cor.~1.5]{LoComp}
(see also \cite{Padma})
$$
  |\phi_J(\bar k) | \>=\>
    \prod_{v\in V} m_v^{\deg v - 2}.
$$
Note that even in this `simplest' case, no closed-form description of the group $\phi_J(\bar k)$ is known,
only for its order. It seems generally hard, even for semistable
curves; see e.g. the summary in \cite{Alar} in the language of graphs and their Laplacians.

In the opposite purely toric case ($C$ semistable), the formula becomes
\begin{equation}\label{itorfor}
  |\phi_J(\bar k) | \>=\>    \det\bigl(\langle\cdot,\cdot\rangle\big|_{H_1(\D,\Z)}\bigr), \qquad \langle e,e'\rangle = \delta_{ee'}.
\end{equation}
This is well-known, and goes back essentially to Kirchhoff's 1847 paper,
where he introduced fundamental cycles for electrical networks \cite{Kir47}.
Here $\D$
is an ordinary unweighted graph ($m_v=1$), and the determinant on the right has been studied
under many different names, e.g. Jacobian, component group, chip-firing group, sandpile group, number of spanning trees,
complexity; see \cite{Lo08} for a survey, and also \cite{Betts}, \cite{HN}, \cite{HNbase}, \cite{MZ}.

One can prove \eqref{itorfor}
via the ``flow-cut decomposition'': the lattice $\Z^E$ splits
orthogonally over $\Q$ into the cycle space $H_1(\D,\Z)$ and a
complementary ``cut space'' generated by the boundaries of vertices,
and $\phi_J(\bar k)$ can be recovered from here as a group \cite[\S9.6]{BLR}, \cite{Bacher}.
We extend this construction to the non-semistable case (when $k=\bar k$):

\namedthm{B}
[Weighted flow-cut = Thm.~\ref{flowcut}]
\label{iflowcut}
Choose an orientation of the edges of $\D$.
Define the length pairing $\langle e,e'\rangle=\delta_{ee'}s_e$ with $s_{\e vw}=1/m_vm_w$.
There is a lift $\lambda\colon\Z^V\to\Z^E$ whose image is orthogonal to $H_1(\D,\Z)$
with respect to $\lara$, and an exact sequence
$$
  0 \>\longrightarrow\> \phi_J(\bar k)
    \>\longrightarrow\>
    \frac{\Z^E}{H_1(\D,\Z)+\lambda(\Z^V)}
    \>\xrightarrow{\e{v}{w}\mapsto 1_v-1_w}\>
    \bigoplus_{v\in V}\frac{\Z}{m_v\Z} \>\longrightarrow\> 0.
$$
\endnamedthm

\noindent
It is then easy to deduce Theorem~\ref{etnc} when $k=\bar k$ by analysing indices, see Theorem~\ref{TNC}.

\subsection{Arithmetic case}

Now suppose $k$ is any perfect field and $G=\Gal(\bar k/k)$.
Taking Galois invariants in Raynaud's sequence \eqref{raynaudeq} yields an exact sequence
$$
  0 \lar \frac{\ker(\beta^G)}{\im(\alpha^G)} \lar \phi_J(k)
    \lar \ker\left(\psi\colon H^1(G,\im\alpha)\to H^1(G,\ker\beta)\right) \lar 0.
$$
Following Bosch--Liu \cite[\S1]{BL}, we compute $|\phi_J(k)|$ from the other two terms.
Let us focus on $\ker(\beta^G)/\im(\alpha^G)$ now, postponing the `cohomological term'
$\ker\psi$ to \S\ref{sscoh}.
Write

\begin{center}
\begin{tabular}{lll@{\qquad}l}
$\DG$    &=& $(\VG,\EG)$ quotient graph: $\VG =V/G$, $\EG=E/G$ sets of orbits, \\
$r_v, r_e$ &=& orbit sizes for $v\in \VG, e\in \EG$,\\
$\Vlift{v}$ &=& $\sum_{v' \in v} v'$ the natural lift of $v \in \VG$ to $\Z^V$,\\
$m'$       &=& $\gcd_{v \in \VG}(r_v m_v)$.
\end{tabular}
\end{center}

The map $\alpha^G$ takes values in $\Z^{\VG}$ \cite[Lem.~1.4]{BL}, and
$\alpha^G$ and $\beta^G$ are as follows:

\begin{center}
\begin{tikzpicture}[scale=1.2, commutative diagrams/every diagram, inj/.style={{Hooks[right]}->}, surj/.style={->>}]
\node (ZV1)  at (0,0) {$\Z^{\VG}$};
\node (ZV2)  at (4,0) {$\Z^{\VG}$};
\node (Z)    at (7,0) {$\Z$.};
\path[commutative diagrams/.cd, every arrow, every label]
(ZV1) edge node[swap] {$v\mapsto\sum_{w\in \VG} \frac{1}{r_w}(\Vlift{v}\cdot\Vlift{w})w$} node {$\alpha^G$} (ZV2)  
(ZV2) edge node[swap] {$v\mapsto r_v m_v$} node {$\beta^G$} (Z);     
\end{tikzpicture}
\end{center}

\noindent
This is very similar to the $k=\bar k$ case, except that $\alpha^G$ is not symmetric,
so Theorem \ref{iflowcut} does not apply directly. We can symmetrise it as follows:
$$
  \alpha^{\sym}\colon\Z^{\VG}\lar \Z^{\VG}, \qquad
  v\mapsto\sum_{w\in \VG} \frac{N}{r_vr_w}(\Vlift{v}\cdot\Vlift{w})w \qquad
  (N=\mathop{\rm lcm}\limits_{v\in\VG} r_v).
$$
From elementary linear algebra (Lemma \ref{BLsym}), there is an exact sequence
\begin{equation}
  0 \lar \frac{\ker(\beta^G)}{\im(\alpha^G)} \lar
    \coker(\alpha^{\sym})_{\tors} \lar
    \bigoplus_{v\in \VG} \Z/\tfrac{N}{r_v}\Z \lar
    \Z/\tfrac{N}{m'}\Z \lar 0,
\end{equation}
so it suffices to compute $\coker(\alpha^{\sym})_{\tors}$.
This has again a graph-theoretic interpretation, and using standard reduction steps
for weighted graphs that stem from electrical networks (see \S\ref{stamgraph}),
we compute it and get the arithmetic part
of Theorem \ref{etnc} and the term $\gcd_{v\in \VG} (r_v m_v)$, see Theorem \ref{tarith}.

For an alternative description of $|\ker\beta^G/\im\alpha^G|$ via spanning trees,
see Srinivasan \cite{Padma}.

\begin{example}
\label{IV*---IV*}
Suppose $k=\bar k$, and take the genus 4 reduction family
$\redtype{IV^*\e(\hbox{\tiny $m$})\e(\hbox{\tiny $n$})\e(\hbox{\tiny $r$})IV^*}$,
in the notation of \cite{types}. Its special fibre is made of two Kodaira types \IVS{}.
It has two principal components of multiplicity 3 linked with chains of $\P^1$s of depth $m$, $n$ and $r$ (left picture):

\begin{center}
\begin{tikzpicture}[o1/.style={blue,scale=0.7,inner sep=1,anchor=north}, o2/.style={blue,scale=0.7,inner sep=1,anchor=south}, o3/.style={thick,shorten >=2}, o4/.style={inner sep=2pt,blue,above left,scale=1}, o5/.style={auto,inner sep=9,scale=0.7,anchor=south}, o6/.style={shorten <=-3pt, shorten >=-3pt}, o7/.style={inner sep=2pt,scale=0.8,blue,left}, o8/.style={inner sep=2pt,scale=0.8,blue,right},xscale=1,yscale=0.9] \draw[o3] (-3/5,0)--(9,0) node[o4] {3}; \draw[o6] (6/5,0)--node[o8] {2} (6/5,4/5); \draw (6/5,4/5) ++(0.11,0.07)--++(-0.41,-0.23)++(0.15,0.12) ++(-0.01,0) node[o2]{1} ++(0.01,0) ++(-0.02,-0.15)--++(-0.24,0.37)++(-0.06,-0.11) node{$\cdot$} ++(-0.1,-0.06) node{$\cdot$} ++(-0.1,-0.06) node{$\cdot$}++(0.08,0.15) node[o5]{$m\!+\!1$} ++(-0.15,-0.22)--++(-0.24,0.37)++(-0.03,-0.16) ++(0.03,0) node[o1]{1} ++(-0.03,0) ++(0.03,0.16)++(0.12,-0.02)--++(-0.41,-0.23); \draw[o6] (0,4/5)--node[o7] {2} (0,10/5); \draw[o6] (22/5,0)--node[o8] {2} (22/5,4/5); \draw (22/5,4/5) ++(0.11,0.07)--++(-0.41,-0.23)++(0.15,0.12) ++(-0.01,0) node[o2]{1} ++(0.01,0) ++(-0.02,-0.15)--++(-0.24,0.37)++(-0.06,-0.11) node{$\cdot$} ++(-0.1,-0.06) node{$\cdot$} ++(-0.1,-0.06) node{$\cdot$}++(0.08,0.15) node[o5]{$n\!+\!1$} ++(-0.15,-0.22)--++(-0.24,0.37)++(-0.03,-0.16) ++(0.03,0) node[o1]{1} ++(-0.03,0) ++(0.03,0.16)++(0.12,-0.02)--++(-0.41,-0.23); \draw[o6] (16/5,4/5)--node[o7] {2} (16/5,10/5); \draw[o6] (38/5,0)--node[o8] {2} (38/5,4/5); \draw (38/5,4/5) ++(0.11,0.07)--++(-0.41,-0.23)++(0.15,0.12) ++(-0.01,0) node[o2]{1} ++(0.01,0) ++(-0.02,-0.15)--++(-0.24,0.37)++(-0.06,-0.11) node{$\cdot$} ++(-0.1,-0.06) node{$\cdot$} ++(-0.1,-0.06) node{$\cdot$}++(0.08,0.15) node[o5]{$r\!+\!1$} ++(-0.15,-0.22)--++(-0.24,0.37)++(-0.03,-0.16) ++(0.03,0) node[o1]{1} ++(-0.03,0) ++(0.03,0.16)++(0.12,-0.02)--++(-0.41,-0.23); \draw[o6] (32/5,4/5)--node[o7] {2} (32/5,10/5); \draw[o3] (-3/5,10/5)--(9,10/5) node[o4] {3}; \end{tikzpicture}
\qquad\qquad
\raise-5pt\hbox{
\begin{tikzpicture}[
    v/.style = {circle, fill=black, inner sep=0pt, minimum size=4pt},
    e/.style = {draw=black},
    l/.style = {inner sep=4pt}
  ]
  \coordinate (P1) at (0,-0.7);               
  \coordinate (P2) at (0,0.7);
  \coordinate (C1) at (-1,0);                 
  \coordinate (C2) at (0.4,0);
  \coordinate (C3) at (1,0);
  \draw[e] (P1) to[out=180,in=270] (C1);      
  \draw[e] (C1) to[out=90,in=180] (P2);
  \draw[e] (P1) to[out=0,in=270] (C3);
  \draw[e] (C3) to[out=90,in=0] (P2);
  \draw[e] (P1) to[out=45,in=270] (C2);       
  \draw[e] (C2) to[out=90,in=-45] (P2);
  \node[v] at (P1) {}; \node[v] at (P2) {};   
  \node[l,below,blue] at (P1) {3};            
  \node[l,above,blue] at (P2) {3};
  \node[l,left]  at (C1) {$m\!+\!\frac 43$};  
  \node[l,left]  at (C2) {$n\!+\!\frac 43$};
  \node[l,right] at (C3) {$r\!+\!\frac 43$};
\end{tikzpicture}
}\\
Special fibre of $\redtype{IV^*\e(\hbox{\tiny $m$})\e(\hbox{\tiny $n$})\e(\hbox{\tiny $r$})IV^*}$ and the associated weighted graph as a metric space.
\end{center}

The total length of the chains of $\P^1$s with respect to $\lara$ is
$$
  \tfrac{1}{3\cdot 2}
  + \tfrac{1}{2\cdot 1}
  + \underbrace{\tfrac{1}{1\cdot 1} + \cdots + \tfrac{1}{1\cdot 1}}_{m\text{ intersection points}}
  + \tfrac{1}{1\cdot 2}
  + \tfrac{1}{2\cdot 3}
  \>=\> m + \tfrac{4}{3},
$$
and, respectively, $n+\tfrac{4}{3}$ and $r+\tfrac{4}{3}$. In the obvious basis for homology,
$$
  \det\bigl(\langle\cdot,\cdot\rangle\big|_{H_1(\DG,\Z)}\bigr) =
  \Biggl|
  \begin{matrix}
  (m\!+\!\frac43) \!+\! (n\!+\!\frac43)\!\!\!\! & -(n\!+\!\frac43) \\[3pt]
  -(n\!+\!\frac43) & (n\!+\!\frac43) \!+\! (r\!+\!\frac43) \cr
  \end{matrix}
  \Biggr| = \frac{3(mn\!+\!mr\!+\!nr) \!+\! 8(m\!+\!n\!+\!r) \!+\! 16}{3}.
$$
The unipotent part is
$$
  \prod_{v\in \VG} m_v^{\deg v - 2} \>=\> 3^{3-2} \cdot 3^{3-2} \cdot \prod_{v\text{ in chains}} m_v^{\,0} \>=\> 9,
$$
and the Tamagawa number is the product
$
  |\phi_J(\bar k)| \>=\> 9(mn\!+\!mr\!+\!nr) + 24(m\!+\!n\!+\!r) + 48.
$
\end{example}

\begin{remark}
As in the example above, in minimal models with normal crossings, Tamagawa numbers respect
the distinction between principal components and chains of $\P^1$s. In the terminology of \cite{types}:

\begin{itemize}
\item[(i)]
Interior chain components play little role: in the unipotent and toric parts
of Theorem \ref{etnc} they only enter through total length of chains.
\item[(ii)]
It is not hard to deduce from (i) that for any family of reduction types, its Tamagawa number
is a quadratic form in the depths of its chains.
\item[(iii)]
Similarly, in the arithmetic case, the interior chain components and their edges have
the same orbit sizes, so $r_e/r_v$ cancels for them as well.
\item[(iv)]
Thus, a natural framework for the Tamagawa numbers is weighted graphs that are insensitive to degree
2 vertices (cf. first picture on the right in Example \ref{IV*---IV*}).
This is the formalism summarised in \S\ref{stamgraph}, and used to prove Theorem \ref{tarith}.
\end{itemize}

\end{remark}

\subsection{Cohomological part}
\label{sscoh}

The most intricate term is the kernel of
$$
  \psi \colon H^1(G, \im\alpha) \to H^1(G, \ker\beta).
$$
When $G$ is procyclic, Bosch--Liu \cite[Lemmas 1.11, 1.19, Thm.~1.17]{BL}
compute both groups and the map $\psi$ explicitly, using cyclic homology. If
$(d_v)_v$ are the actual (non-normalised\footnote{%
In terms of normalised multiplicities $m_v = d_v/c$, one has $d = \gcd d_v = c$ and $d' = c\,m'$.})
geometric multiplicities, $d=\gcd_{v\in\VG}(d_v)$,
$d' = \gcd_{v\in\VG}(r_v d_v)$, they show
\begin{itemize}
\item[i.] $H^1(G, \ker\beta) \>\cong\> d\Z/d'\Z\>\> (\>\iso \Z/m'\Z)$,
\item[ii.] $H^1(G, \im\alpha)  \>\cong\> d\Z/d'\Z\>\> (\>\iso \Z/m'\Z)$,
\item[iii.] the image of $\psi$ has order $q=1$ if $d' \mid (\genus(C/K) - 1)$
  and order $q=2$ otherwise.
\end{itemize}

\noindent
In \cite[Remark~1.18]{BL}, they ask what happens for general $G$, and we show:

\namedthm{C}[=Theorem \ref{psifor}]\label{ipsiJ} In the notation of Definition~\ref{defpsi},
$$
  \ker\bigl(\psi\colon H^1(G,\im\alpha)\to H^1(G,\ker\beta)\bigr) \>\iso\> \Psi_J.
$$
\endnamedthm

It turns out that $H^1(G, \ker\beta)\iso\Z/m'\Z$ in general, and
$H^1(G, \im\alpha)$ can be seen as a subgroup of $\Gvee = \Hom(G, \Q/\Z)$ (Theorem~\ref{kerpsi}).
We find an explicit cocycle formula for~$\psi$,
which Theorem \ref{psifor} rewrites as a sum of parities.
The dichotomy $|\im\psi| \in \{1,2\}$ still holds as in the cyclic case,
but $\ker\psi$ may be non-cyclic, and can have different order.

\begin{example}
Let $\pi$ be a uniformiser of $\cO_K$ and $f(x)\in\cO_K[x]$ a quartic such that
$f \bmod \pi$ has distinct roots $\alpha_1,...,\alpha_4\in \bar k^\times$.
By \cite[Thm.~3.13]{newton}, the equation of a genus 3 curve
$$
  C: \pi y^4 = f(x)
$$
is `$\Delta_v$-regular', of reduction type \redtype{4^{1,1,1,1}} in the notation of \cite{types}:
the special fibre $\cC_k$ has one principal component of multiplicity 4 meeting four
$\P^1$s of multiplicity 1,%
$$
\begin{tikzpicture}[o1/.style={shorten <=-3pt}, o2/.style={inner sep=2pt,scale=0.8,blue,right},xscale=1,yscale=0.9]
\draw[thick,shorten >=2] (-1/3,0)--(121/30,0) node[inner sep=2pt,blue,above left,scale=0.8] {4};
\draw[o1] (0,0)--node[o2] {1} (0,3/5);
\draw[o1] (4/5,0)--node[o2] {1} (4/5,3/5);
\draw[o1] (8/5,0)--node[o2] {1} (8/5,3/5);
\draw[o1] (12/5,0)--node[o2] {1} (12/5,3/5);
\end{tikzpicture}
$$
and these four $\P^1$s are in bijection with the roots $\alpha_i$, as a $G$-set.
This $G$-set can be arbitrary, if e.g. $k=\Q$, and we obtain the following possibilities from \ref{kerpsi}
(with $q$ defined as in iii):

\begin{center}
\begin{tabular}{ccccccccc}
\multicolumn{2}{l}{Image of $G$} & Generators & $m'$ & $H^1(G,\im\alpha)$ & $H^1(G,\ker\beta)$ & $\Psi_J=\ker(\psi)$ & $m'/q$ \\
\hline
$C_1$     & cyclic & $1$                      & 1 & $C_1$   & $C_1$ & $C_1$  & 1 \\
$C_2$     & cyclic & $(12)(34)$               & 2 & $C_2$   & $C_2$ & $C_2$  & 2 \\
$C_2$     & cyclic & $(12)$                   & 1 & $C_1$   & $C_1$ & $C_1$  & 1 \\
$C_3$     & cyclic & $(123)$                  & 1 & $C_1$   & $C_1$ & $C_1$  & 1 \\
$C_4$     & cyclic & $(1234)$                 & 4 & $C_4$   & $C_4$ & $C_2$  & 2 \\
$V_4$     &        & $(12)(34),(13)(24)$      & 4 & $V_4$   & $C_4$ & $V_4$  & 2 \\
$V_4$     &        & $(12),(34)$              & 2 & $C_1$   & $C_2$ & $C_1$  & 2 \\
$S_3$     &        & $(123),(12)$             & 1 & $C_1$   & $C_1$ & $C_1$  & 1 \\
$D_4$     &        & $(1234),(13)$            & 4 & $C_2$   & $C_4$ & $C_2$  & 2 \\
$A_4$     &        & $(123),(12)(34)$         & 4 & $C_1$   & $C_4$ & $C_1$  & 2 \\
$S_4$     &        & $(1234),(12)$            & 4 & $C_1$   & $C_4$ & $C_1$  & 2 \\
\end{tabular}\\[2pt]
Example. Cohomological term for the genus 3 quartic $\pi y^4 = f(x)$ of type \redtype{4^{1,1,1,1}}\\
for a given action of $G=\Gal(\bar k/k)$ on the four distinct roots of $f\bmod\pi$.
\end{center}
When $G$ acts on $\D$ through a cyclic group (first five cases), $|\ker\psi|=m'/q$ as proved by Bosch--Liu. Otherwise, $\ker\psi$ is not always
cyclic, and can have order both smaller and larger than $m'/q$, as the table illustrates.
\end{example}

\begin{remark}
If $K$ is a global field with a valuation $v$, then
$k$ is finite and $G$ is procyclic.
Here $q=2$ for $C/K_v$ is equivalent to the curve being `deficient' in
the sense of Poonen--Stoll \cite[\S8]{PS} (i.e. $\Pic^{g-1}C(K_v)=\emptyset$), a condition that
they relate to $\Sha(C/K)$ being of square order (if finite). The description of $\ker\psi=\Psi_J$
in Theorem \ref{ipsiJ} can be viewed as an analogue of deficiency when
$k$ is a perfect but not necessarily finite.
\end{remark}

\section{Weighted flow-cut}
\label{sgeomgen}

In this section we prove Theorem \ref{iflowcut} and deduce Theorem \ref{etnc} in the geometric case
(no action of $G$). This is a question about the intersection pairing that can be phrased
purely in terms of matrices.

Let $M\in M_n(\Z)$ be any symmetric matrix with
non-negative off-diagonal entries and kernel of rank 1, generated by a primitive vector
$\mathbf{m}\in\N^n$.
Let $V$ index the rows and columns of $M$, and write
$v\cdot w=M_{vw}$, the corresponding entry of $M$.
Let $\alpha$ and $\beta$ be Raynaud's maps given by \eqref{eqnraynaud}.
Write $\D=\D(M)$ for the multigraph with vertex set $V$,
multiplicity $m_v$ at each $v\in V$, and $M_{vw}$ edges between
$v$ and $w$ for each pair $v\ne w$ (no loops).
Since $\ker M$ has rank $1$, it follows that $\D$ is connected.
Finally, write
\begin{equation}\label{eqbasesgen}
  \Z^V=\bigoplus_{v\in V}\Z v \quad\subset\quad
  \bigoplus_{v\in V} \Z \frac{v}{m_v} = \Z^{V/m}, \qquad
  \Z^E=\bigoplus_{e\in E}\Z e.
\end{equation}

\comment
\subsection{Setup}

Throughout this section, $M\in M_n(\Z)$ is a symmetric matrix with
non-negative off-diagonal entries, whose kernel has rank 1, generated by a primitive vector
$\mathbf{m}\in\N^n$.
The associated \emph{weighted graph} $\D=\D(M,\mathbf{m})=(V,E,(m_v)_{v\in V})$ is:
\begin{itemize}
  \item $|V|=n$, and each vertex $v$ carries the \emph{multiplicity} $m_v$, so that $\mathbf{m}=(m_v)_v$.
  \item For distinct $v,w\in V$, the number of edges between $v$ and $w$
        equals $M_{vw}\ge 0$.
  \item The \emph{degree} $\deg v$ of a vertex $v$ is the number of
        edge-ends at $v$.
  \item Choose an orientation for every edge, and write $\e vw$ (or $e$) for an edge from $v$ to $w$.
  \item The \emph{length } of an edge $e=\e vw\in E$ is $s_e=\tfrac{1}{m_v m_w}$,
     and we define the \emph{length pairing} on $\Z^E$ by
     $\langle e,e'\rangle = s_e\,\delta_{ee'}$.
\end{itemize}
The \emph{intersection pairing} is the symmetric bilinear form
$\cdot\colon \Z^V\times\Z^V\to\Z$ given by $M$, so $v\cdot w=M_{vw}$.
By assumption, $\mathbf{m}=(m_v)_{v\in V}$ is in the kernel. Thus, for all $v\in V$,
$$
  \sum_{w\in V} m_w\,(v\cdot w) \>=\> 0 \qquad\text{and}\qquad v\cdot v = -\frac1{m_v}\sum_{w\ne v} m_w(v\cdot w)\in\Z_{\le 0}.
$$
The \emph{component group} of $M$ is
$$
  \coker(M)_{\tors} = \coker(M)_{\tors}.
$$

\endcomment

\begin{definition}[Weighted lift $\lambda$]
Choose a direction for every edge $e$ of $\G$, so edges will be denoted by $\e{v}{w}$.
Define maps
$$
\begin{array}{llllllll}
  \partial:  & \Z^E  & \to & \Z^{V} && \multicolumn{2}{l}{\text{(incidence map)}}\cr
           & \e vw & \mapsto & v-w\Bcr
  \delta:  & \Z^E  & \to & \Z^{V/m} && \multicolumn{2}{l}{\text{(weighted incidence map)}}\cr
           & \e vw & \mapsto & \frac{v}{m_v} - \frac{w}{m_w}\Bcr
  \lambda: & \Z^V  & \to & \Z^E & \text{where} &
                \e{v}{w_i} & \text{are all edges from $v$,}\Tcr
           & v   & \mapsto & \sum_i m_{w_i} \e{v}{w_i} - \sum_j m_{z_j} \e{z_j}{v}&&
                \e{z_j}{v} & \text{are all edges to $v$.}
\end{array}
$$
All three depend on the orientation, but $\delta \circ \lambda$ and $\partial \circ \lambda$ do not.
\end{definition}

\begin{proposition}
\label{bigdiagram}
The following diagram commutes, and its bottom row is exact:

\smallskip

\begin{tikzpicture}[scale=1.2, commutative diagrams/every diagram, inj/.style={{Hooks[right]}->}, surj/.style={->>}]
\node (ZV1)  at (0,2) {$\Z^V$};
\node (ZV2)  at (3,2) {$\Z^V$};
\node (ZE)   at (0,0) {$\Z^E$};
\node (Zsum) at (3,0.8) {$\Z^{V/m}$};
\node (ZV3)  at (6,0) {$\Z^V$};
\node (Z)    at (9,0) {$\Z$.};
\node (H1)   at (-3,0) {$H_1(\D,\Z)$};
\path[commutative diagrams/.cd, every arrow, every label]
(ZV1) edge node {$v\mapsto\sum_w (v\cdot w)w$} node[swap] {$\alpha$} (ZV2)  
(ZV2) edge[bend left=10] node {$v\mapsto m_v$} node[swap] {$\beta$} (Z)     
(ZV1) edge node[swap] {$-\lambda$} (ZE)                                     
(ZV2) edge[inj] node {$v\mapsto v$} node[swap] {$\mu$} (Zsum)               
(ZE) edge[bend left=10] node[sloped] {$\e vw \,\mapsto\, \frac{v}{m_v} - \frac{w}{m_w}$}    
  node[sloped, swap] {$\delta$} (Zsum)
(Zsum) edge[bend left=10] node[sloped] {$\frac{v}{m_v}\mapsto v$} node[sloped,swap]{$\sigma\>\>(\iso)$} (ZV3)   
(H1) edge[inj] node[swap] {$\iota$} (ZE)
(ZE)  edge[] node[] {$\e vw \,\mapsto\, v-w$} node[swap] {$\partial$} (ZV3)
(ZV3) edge[surj] node{$v\mapsto 1$} node[swap] {$\triv$} (Z);
\end{tikzpicture}

\noindent
In particular, $-\lambda$ is a lift of the intersection pairing map $\alpha$ to the edges: for all $v\in V$,
$$
  (-\delta\circ\lambda)(v) \>=\> \sum_{w\in V} (v\cdot w)\,w \qquad \in \Z^{V/m}.
$$
\end{proposition}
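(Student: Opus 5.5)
We verify the statement by direct computation on basis elements, treating the three pieces separately: the commutativity of the square through $\Z^{V/m}$, the identities $\sigma\circ\delta=\partial$ and $\sigma\circ\mu$ compatible with $\triv$ and $\beta$, and the exactness of the bottom row $H_1(\D,\Z)\to\Z^E\to\Z^V\to\Z\to 0$.

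\emph{Commutativity.} First, $\sigma\circ\delta=\partial$ is immediate, since $\sigma$ sends $\frac{v}{m_v}\mapsto v$; this shows the triangle $\Z^E\to\Z^{V/m}\to\Z^V$ commutes. Next, $\triv\circ\sigma\circ\mu(v)=\triv(m_v\cdot v)=m_v=\beta(v)$, because $\mu(v)=v=m_v\cdot\frac{v}{m_v}$. The main identity is $-\delta\circ\lambda=\mu\circ\alpha$. We compute $\delta(\lambda(v))$ edge by edge. An outgoing edge $\e{v}{w}$ contributes $m_w\bigl(\frac{v}{m_v}-\frac{w}{m_w}\bigr)=\frac{m_w}{m_v}v-w$. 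An incoming edge $\e{z}{v}$ contributes $-m_z\bigl(\frac{z}{m_z}-\frac{v}{m_v}\bigr)=\frac{m_z}{m_v}v-z$, which has the same shape. This shows that the result is independent of the orientation. Summing over all edges at $v$, each neighbour $w\ne v$ occurs $v\cdot w$ times, so
$$\delta\lambda(v)=\frac{1}{m_v}\Bigl(\sum_{w\ne v}(v\cdot w)m_w\Bigr)v-\sum_{w\ne v}(v\cdot w)w.$$
Since $\mathbf m\in\ker M$, we have $\sum_{w\ne v}(v\cdot w)m_w=-m_v(v\cdot v)$. The first term therefore equals $-(v\cdot v)v$, and so $-\delta\lambda(v)=\sum_w(v\cdot w)w=\mu(\alpha(v))$, which is the displayed formula.

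\emph{Exactness.} The bottom row is the simplicial chain complex of the connected graph $\D$, augmented by $\triv$. The map $\iota$ is injective because $H_1(\D,\Z)=\ker\partial$ when viewed as a $1$-dimensional complex, so exactness at $\Z^E$ holds by definition. We have $\triv\circ\partial=0$, and $\triv$ is surjective. For exactness at $\Z^V$, we use connectedness of $\D$, which holds because $\ker M$ has rank $1$, as noted earlier. Any $x\in\ker\triv$ is a sum of differences $v-w$, and each such difference is $\partial$ of a path from $v$ to $w$.

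The only step requiring care is the sign and orientation bookkeeping in computing $\delta\lambda(v)$, together with the use of the kernel relation to recover the self-intersection term. Everything else is routine.
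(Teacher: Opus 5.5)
Your proof is correct and follows essentially the same route as the paper. The key identity $-\delta\circ\lambda=\mu\circ\alpha$ comes from the same edge-by-edge computation combined with the fibre relation $\sum_{w}(v\cdot w)m_w=0$. The only difference is that you compute incoming edges directly (noting they have the same shape), where the paper first reorients all edges at $v$ outward, and your explicit checks of $\sigma\circ\delta=\partial$, of $\beta$ versus the augmentation, and of exactness via connectedness of the dual graph spell out what the paper calls standard.
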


\begin{proof}
The bottom row defines $H_1(\D,\Z)$, and its exactness is standard. The only non-trivial
claim is $-\delta\circ\lambda=\mu\circ\alpha$. Pick $v\in V$.
As $(\delta\circ\lambda)(v)$ is unchanged if an edge direction is reversed,
we can direct all edges at $v$ away from $v$ for simplicity (no $z_j$'s). Then
$$
  (\delta\circ\lambda)(v) = \delta(\sum_i m_{w_i} \e{v}{w_i})
   = \sum_i m_{w_i} (\frac{v}{m_v}-\frac{w_i}{m_{w_i}}) = \frac{\sum_i m_{w_i}}{m_v} v - \sum_i w_i
   = -(v\cdot v)v-\sum_{w\ne v}(v\cdot w)w.
$$
\end{proof}

\begin{theorem}[Weighted flow-cut]
\label{flowcut}
Define the length pairing on $\Z^E$ by $\langle e,e'\rangle=\delta_{ee'}s_e$ with $s_{\e vw}=1/m_vm_w$.
In the setup of this section,
\begin{itemize}
\item[(1)] $H_1(\D,\Z)$ is the orthogonal complement of $\lambda(\Z^V)$ in $\Z^E$
with respect to $\lara$:
$$
  H_1(\D,\Z) \>=\> \bigl\{\,y\in\Z^E \,\big|\, \langle\lambda(x),y\rangle=0
    \text{ for all } x\in\Z^V\,\bigr\}.
$$

\item[(2)] There is a short exact sequence
$$
  0 \>\longrightarrow\> \coker(M)_{\tors}
    \>\longrightarrow\>
    \frac{\Z^E}{H_1(\D,\Z)+\lambda(\Z^V)}
    \>\xrightarrow{\>\>\gamma\>\>}\>
    \bigoplus_{v\in V}\frac{\Z}{m_v\Z} \>\longrightarrow\> 0,
$$
where $\gamma$ sends an edge $\e{v}{w}$ to the class of $\partial(\e{v}{w})=v-w\in\Z^V$
in the direct sum.
\end{itemize}
\end{theorem}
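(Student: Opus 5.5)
The plan is to reduce everything to the incidence map $\partial$, using the exactness of the bottom row of Proposition~\ref{bigdiagram}. That row says $H_1(\D,\Z)=\ker\partial$ and $\partial(\Z^E)=\ker(\triv)$.

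For (1), I would compute the pairing of $\lambda(v)$ against a single edge. If $e=\e{v}{w}$, then $e$ occurs in $\lambda(v)$ with coefficient $m_w$, so
$$
\langle\lambda(v),e\rangle=m_w\cdot\frac{1}{m_vm_w}=\frac{1}{m_v}.
$$
If $e=\e{z}{v}$, the same computation gives $-1/m_v$. If $e$ is not incident to $v$, the pairing is $0$. Hence for every $y\in\Z^E$,
$$
\langle\lambda(v),y\rangle=\frac{1}{m_v}\cdot\bigl(\text{coefficient of }v\text{ in }\partial(y)\bigr).
$$
So $y$ is orthogonal to all of $\lambda(\Z^V)$ exactly when $\partial(y)=0$, that is, when $y\in\ker\partial=H_1(\D,\Z)$. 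This proves (1).

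For (2), the first step is to push everything forward along $\partial$. Since $H_1(\D,\Z)=\ker\partial$ and $\partial$ maps onto $\ker(\triv)$, the map $\partial$ induces an isomorphism
$$
\frac{\Z^E}{H_1(\D,\Z)+\lambda(\Z^V)}\;\cong\;\frac{\ker(\triv)}{\partial\lambda(\Z^V)}.
$$
Next I would compute $\partial\lambda(v)$ with all edges at $v$ directed away from $v$, exactly as in the proof of Proposition~\ref{bigdiagram}. Using the kernel relation $\sum_w m_w(v\cdot w)=0$, this gives
$$
\partial\lambda(v)=\sum_{w\ne v}m_w(v\cdot w)(v-w)=-\sum_{w\in V}m_w(v\cdot w)\,w=-D\alpha(v),
$$
where $D\colon\Z^V\to\Z^V$, $w\mapsto m_ww$, is the injective diagonal map. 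So $\partial\lambda(\Z^V)=D(\im\alpha)$.

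The second step is to insert the intermediate lattice $D(\ker\beta)$. Note that $D(\ker\beta)=\ker(\triv)\cap D\Z^V$. This gives a short exact sequence
$$
0\to\frac{D\ker\beta}{D\im\alpha}\to\frac{\ker(\triv)}{D\im\alpha}\to\frac{\ker(\triv)}{\ker(\triv)\cap D\Z^V}\to0.
$$
The first term is isomorphic to $\ker\beta/\im\alpha$, because $D$ is injective. This quotient equals $\coker(M)_{\tors}$: the image $\im\alpha$ has rank $n-1$ because $\ker M$ has rank $1$, while $\ker\beta$ is a saturated sublattice of the same rank containing $\im\alpha$.

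For the third term, I would show
$$
\frac{\ker(\triv)}{\ker(\triv)\cap D\Z^V}\;\cong\;\frac{\ker(\triv)+D\Z^V}{D\Z^V}=\frac{\Z^V}{D\Z^V}=\bigoplus_{v\in V}\Z/m_v\Z.
$$
The middle equality holds because $\gcd(\mathbf m)=1$: some element of $D\Z^V$ has $\triv$-value $1$, so $\ker(\triv)+D\Z^V=\Z^V$.

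The last step is bookkeeping. Tracing an edge $\e{v}{w}$ through these identifications sends it to the class of $v-w$ modulo $D\Z^V$, which is exactly $\gamma$. The main place where care is needed is the sign and orientation check in $\partial\lambda=-D\alpha$, together with the saturation argument identifying $\ker\beta/\im\alpha$ with $\coker(M)_{\tors}$; everything else is formal.
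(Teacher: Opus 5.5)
Your proof is correct. Part (1) is the paper's argument in coordinates: your identity $\langle\lambda(v),y\rangle=\frac{1}{m_v}(\partial y)_v$ is the paper's $\langle\lambda(x),y\rangle=x^TD^{-1}\partial(y)$, which it derives from the matrix identity $M_\lambda=Q^{-1}M_\partial^TD^{-1}$.

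The first step of (2) also coincides with the paper: $\partial\circ\lambda=-D\alpha$, so $\partial$ induces $\Z^E/(H_1(\D,\Z)+\lambda(\Z^V))\cong\ker(\triv)/D(\im\alpha)$. After that the routes diverge.

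The paper identifies $\ker(\triv)/\im(D\alpha)$ with $\coker(D\alpha)_{\tors}$. It then invokes the general Lemma~\ref{DMfor}, which compares $\coker(M)_{\tors}$ with $\coker(DM)_{\tors}$ for a positive diagonal $D$. That lemma requires the primitive left null vector of $DM$, the constant $d$ of the lemma (equal to $1$ here), and a snake-lemma argument on the free quotients.

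You instead filter by $D(\im\alpha)\subseteq D(\ker\beta)\subseteq\ker(\triv)$. This rests on two observations: $D(\ker\beta)=\ker(\triv)\cap D\Z^V$, because $\triv\circ D=\beta$; and $\ker(\triv)+D\Z^V=\Z^V$, because $\gcd(\mathbf m)=1$. Your route is more elementary and self-contained. It lands directly on Raynaud's description $\ker\beta/\im\alpha$ of the component group, and it makes the formula for $\gamma$ immediate.

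The paper's route buys economy across the paper. Lemma~\ref{DMfor} is needed anyway for the symmetrisation Lemma~\ref{BLsym}, where $D=\diag(N/r_v)$, the matrix is not symmetric, and the extra term $\Z/\tfrac{N}{m'}\Z$ is in general nontrivial. One lemma therefore serves both sections. Your filtration argument would also adapt to that case: the last quotient becomes the kernel of $\bigoplus_v\Z/\tfrac{N}{r_v}\Z\to\Z/\tfrac{N}{m'}\Z$ rather than the whole direct sum.
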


\begin{proof}
For a map $f$ between the spaces $\Z^V$, $\Z^E$, $\Z^{V/m}$ write $M_f$ for its matrix
in the bases \eqref{eqbasesgen}. Let $Q=\diag(s_e)$ be the matrix of $\lara$
on $\Z^E$, and $D=\diag(m_v)$.

(1)
From the definition of $\lambda$ and~\ref{bigdiagram},
$$
  M_\lambda \>=\> Q^{-1} M_\partial^T\, D^{-1},
  \qquad M_\lambda^T Q M_\lambda \>=\> -M.
$$
Hence for $x\in\Z^V$ and $y\in\Z^E$,
$$
  \langle\lambda(x),y\rangle \>=\> (M_\lambda x)^T Q\,y \>=\> x^T M_\lambda^T Q\,y
    \>=\> x^T D^{-1} M_\partial\,y \>=\> x^T D^{-1}\,\partial(y).
$$
Since $D^{-1}$ is invertible over $\Q$, this vanishes for all $x\in\Z^V$
if and only if $\partial(y)=0$, that is $y\in H_1(\D,\Z)$.

(2)
From the commutative diagram in \ref{bigdiagram}, $\sigma\circ\delta=\partial$ and $\sigma\circ\mu=D$,
so $\partial\circ\lambda = \sigma\circ(\delta\circ\lambda) = -\sigma\circ(\mu\circ\alpha) = -D\alpha$.
The bottom row is exact, so $\partial\colon\Z^E\to\ker(\triv)$ is surjective with
kernel $\im\iota = H_1(\D,\Z)$.
For $e\in\Z^E$ with $\partial(e)\in\im(D\alpha)$, write $\partial(e)=-\partial(\lambda(u))$;
then $e+\lambda(u)\in\ker\partial = H_1(\D,\Z)$, so $\partial^{-1}(\im D\alpha)=H_1(\D,\Z)+\im\lambda$.
Thus $\partial$ induces an isomorphism
$$
  \Z^E\bigl/(H_1(\D,\Z)+\im\lambda) \>\xrightarrow{\>\sim\>} \ker(\triv)/\im(D\alpha).
$$
Since $\mathbf{m}$ spans the left kernel of $D\alpha=\diag(m_v)\cdot\alpha$,
the map $\triv$ realises the free part of $\coker(D\alpha)$,
so $\ker(\triv)/\im(D\alpha) = \coker(D\alpha)_{\rm tors}$.

Now apply Lemma~\ref{DMfor} with $M=M_\alpha$, $D=D$, $\mathbf{l}=\mathbf{m}$,
$R=\prod_{v\in V}m_v$, and $\hat{d}_v = R/m_v \cdot m_v = R$ for all $v$.
Thus $\hat{d}=1$, and the map $\bigoplus_{v\in V}\Z/m_v\Z\to\Z/\hat{d}\Z=0$ is trivial.
Then \ref{DMfor}(4) yields the required exact sequence, with $\gamma$ as claimed.
\end{proof}

\begin{theorem}
\label{TNC}
Let $M\in M_n(\Z)$ be a symmetric matrix with
non-negative off-diagonal entries and $\ker M=\Z\mathbf{m}$
with $\mathbf{m}\in\N^n$. With $\D=(V,E)$ and $\lara$ as above,
$$
  |\coker(M)_{\tors}| \>\>=\>\>
    \det\bigl(\langle\cdot,\cdot\rangle_{H_1(\D,\Z)}\bigr) \>\cdot\>
    \prod_{v\in V} m_v^{\deg v-2}.
$$
\end{theorem}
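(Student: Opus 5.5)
The plan is to read off $|\coker(M)_{\tors}|$ from the exact sequence of Theorem~\ref{flowcut}(2), computing the middle index by a discriminant argument for the orthogonal pair of sublattices $H_1(\D,\Z)$ and $\lambda(\Z^V)$ of $\Z^E$. Write $T=\coker(M)_{\tors}$, $L_1=H_1(\D,\Z)$ and $L_2=\lambda(\Z^V)$. By Theorem~\ref{flowcut}(2),
$$
  [\Z^E : L_1+L_2] \>=\> |T|\cdot\prod_{v\in V} m_v .
$$
In particular this index is finite, so $L_1+L_2$ has full rank. By Theorem~\ref{flowcut}(1), $L_1$ and $L_2$ are orthogonal for $\lara$, hence $L_1+L_2=L_1\oplus L_2$ over $\Q$. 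Also $\lara$ is positive definite on $\Z^E$, so it is non-degenerate on each $L_i$.

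For a full-rank sublattice $L\subseteq\Z^E$ we have $\det(\lara|_L)=[\Z^E:L]^2\det(\lara|_{\Z^E})$. Orthogonality gives $\det(\lara|_{L_1\oplus L_2})=\det(\lara|_{L_1})\det(\lara|_{L_2})$. Since $\lara$ is diagonal with entries $s_{\e vw}=1/(m_vm_w)$, and each vertex $v$ occurs in exactly $\deg v$ edges,
$$
  \det(\lara|_{\Z^E})=\prod_{e} s_e=\prod_{v\in V} m_v^{-\deg v}.
$$
Combining these,
$$
  |T|^2\prod_v m_v^{2}\cdot\prod_v m_v^{-\deg v} \>=\> \det(\lara|_{H_1(\D,\Z)})\cdot\det(\lara|_{\lambda(\Z^V)}).
$$
This gives exactly the claimed formula once we show $\det(\lara|_{\lambda(\Z^V)})=|T|$.

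For that step, use $M_\lambda=Q^{-1}M_\partial^TD^{-1}$ from the proof of Theorem~\ref{flowcut}. Since $\D$ is connected, $\ker M_\partial^T=\Z\cdot\mathbf 1$. Hence $\ker\lambda=D\cdot\Q\mathbf 1\cap\Z^V=\Z\mathbf m$, using that $\mathbf m$ is primitive. So $\lambda$ induces an isomorphism of $\Z^V/\Z\mathbf m$ onto $L_2$. Under it, the Gram form pulls back to $M_\lambda^TQM_\lambda=-M$, i.e.\ the non-degenerate form induced by $-M$ on $\Lambda=\Z^V/\Z\mathbf m$. The absolute value of its discriminant is $[\Lambda^*:\Lambda]$, the index of the image of $\Lambda$ in $\Lambda^*$ under the map $x\mapsto -Mx$.

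Now identify $\Lambda^*$ with $\{y\in\Z^V : y\cdot\mathbf m=0\}=\ker\beta$. The image of $\Lambda$ is then $\im M=\im\alpha$, so the index is $|\ker\beta/\im\alpha|$. This equals $|T|$ because $\ker\beta/\im\alpha=(\coker\alpha)_{\tors}$. That holds since $\Z^V/\ker\beta\cong\beta(\Z^V)\subseteq\Z$ is torsion-free (indeed $\beta$ is surjective as $\gcd(\mathbf m)=1$), while $\ker\beta$ and $\im\alpha$ have the same rank $n-1$. The sign is harmless because $\lara$ is positive definite on $L_2$.

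The main obstacle is this last identification: checking carefully that $\ker\lambda$ is exactly $\Z\mathbf m$, so that $L_2\cong\Z^V/\Z\mathbf m$ as lattices rather than only up to finite index, and that the dual lattice is precisely $\ker\beta$. A slip in either place would introduce a spurious factor of $\prod m_v$ or an index of $\Z\mathbf m$ in its saturation. The other steps are formal bookkeeping, and no saturation hypothesis on $L_2$ is needed since the discriminant–index relation holds for any full-rank sublattice.
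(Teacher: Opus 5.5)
Your proof is correct and follows the paper's argument. The flow-cut sequence gives $[\Z^E:H_1(\D,\Z)+\lambda(\Z^V)]=|\coker(M)_{\tors}|\prod_v m_v$, and orthogonality together with $\det(\lara|_{\Z^E})=\prod_v m_v^{-\deg v}$ reduces everything to $\det(\lara|_{\lambda(\Z^V)})=|\coker(M)_{\tors}|$. The differences are only cosmetic. You verify that last identity via the dual lattice $(\Z^V/\Z\mathbf m)^*=\ker\beta$, where the paper uses a basis $\{\mathbf m,x_1,\dots,x_{n-1}\}$ and the block form of $M$, and you get $\ker\lambda=\Z\mathbf m$ from connectedness rather than from positive-definiteness of $Q$.
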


\begin{proof}
Taking orders in the exact sequence of Theorem~\ref{flowcut} (2), we get
$$
  |\coker(M)_{\tors}|\>\cdot\>\prod_{v\in V}m_v
   \>=\>
  \Bigl|\frac{\Z^E}{H_1(\D,\Z)+\lambda(\Z^V)}\Bigr|.
$$
Keeping the notation from the proof of \ref{flowcut} (1), recall that
$$
  M_\lambda \>=\> Q^{-1} M_\partial^T\, D^{-1} \>=\> Q^{-1} M_\delta^T,
  \qquad M_\lambda^T Q M_\lambda \>=\> -M.
$$
Since $Q$ is positive definite, $\ker\lambda = \ker(M_\lambda^T Q M_\lambda) = \ker M = \Z\mathbf{m}$.
Extend $\mathbf{m}$ to a $\Z$-basis $\{\mathbf{m},x_1,\ldots,x_{n-1}\}$ of $\Z^V$,
and let $L = \bigoplus_i \Z x_i$, so $\Z^V = \Z\mathbf{m} \oplus L$.
Then $\lambda|_L\colon L\to\Z^E$ is injective with image $\im\lambda$,
and by \ref{pairings}(2),
$$
  \det\bigl(\lara|_{\im\lambda}\bigr)
  \>=\> \det\bigl(M_{\lambda|_L}^T\,Q\,M_{\lambda|_L}\bigr)
  \>=\> \det(-M_L),
$$
where $M_L$ is the $(n{-}1)\times(n{-}1)$ restriction of $M$ to $L$.
In the basis $\{\mathbf{m},x_1,\ldots,x_{n-1}\}$, $M$ has block form
$\smallmatrix000{M_L}$, so
$\coker(-M)_{\tors}\cong\coker(-M_L)$ has order $|\det(-M_L)|$. Hence
$$
  \det\bigl(\lara|_{\im\lambda}\bigr) \>=\> |\coker(-M)_{\tors}| \>=\> |\coker(M)_{\tors}|.
$$
Next, on the full edge space $\Z^E$,
$$
  \det\bigl(\lara\big|_{\Z^E}\bigr) = \det Q = \prod_{\e vw\in E} m_v^{-1} m_w^{-1}
  = \prod_{v\in V}m_v^{-\deg v}.
$$
Finally, $\im\lambda$ and $\im \iota$ are orthogonal for $\lara$ by Theorem \ref{flowcut} (1), so
$$
  \det\bigl(\lara\big|_{\Z^E}\bigr)\cdot\Bigl|\frac{\Z^E}{H_1(\D,\Z)+\lambda(\Z^V)}\Bigr|^2
  = \det\bigl(\lara\big|_{H_1(\D,\Z)}\bigr)\cdot\det\bigl(\lara\big|_{\im\lambda}\bigr)
$$
by the orthogonal decomposition formula \ref{pairings} (4). This becomes
$$
  \prod_{v\in V} m_v^{-\deg v} \cdot \Bigl(|\coker(M)_{\tors}| \cdot \prod_{v\in V} m_v\Bigr)^2  = \det\bigl(\lara|_{H_1(\D,\Z)} \bigr) \cdot |\coker(M)_{\tors}|,
$$
and simplifies to the claimed formula.
\end{proof}

\section{Arithmetic part}
\label{s:arith}

As in \S\ref{sgeomgen}, let $M\in M_n(\Z)$ be a symmetric matrix such that $\ker M=\Z\mathbf{m}$ with
$\mathbf{m}\in\N^n$. Let $\D=(V,E,(m_v))$ be the associated dual graph; we may think of it as
the \emph{geometric} dual graph of a model $\cC$, over $\bar k$.

Let a group $G$ (e.g.\ $\Gal(\bar k/k)$) act on $\D$ without inversion.
Thus, it acts on $V$ and $E$,
and preserves adjacency and multiplicities. As in \S\ref{sintro}, write

\smallskip
\begin{center}
\begin{tabular}{lll@{\qquad}l}
$\DG$    &=& $(\VG,\EG,(m_{v}))$ quotient graph: $\VG =V/G$, $\EG=E/G$ sets of orbits, \\
$r_v, r_e$ &=& orbit sizes for $v\in \VG, e\in \EG$,\\
$m_v$    &=& normalised multiplicity, constant on $G$-orbits (as in \S\ref{sintro}).\\
$\Vlift{v}$ &=& $\sum_{v' \in v} v'$ the natural lift of $v \in \VG$ to $\Z^V$.\\
$m'$       &=& $\gcd_{v \in \VG}(r_v m_v)$.
\end{tabular}
\end{center}

\noindent
Write $\MBL$ for the matrix of $\alpha^G$.
Set $N = \mathrm{lcm}_{v\in \VG}(r_v)$, and
define $\MSym = \MBL \cdot \mathrm{diag}(N/r_v)$.

\begin{lemma}[Symmetrisation]\label{BLsym}
The matrix $\MSym$ is integer symmetric, and has kernel of rank~1 that contains $(m_v r_v)_{v\in\VG}$.
There is an exact sequence of abelian groups
$$
  0 \to \coker(\MBL)_{\tors} \lar \coker(\MSym)_{\tors} \to \bigoplus_{v\in \VG} \Z/\tfrac{N}{r_v}\Z
    \lar \Z/\tfrac{N}{m'}\Z \to 0.
$$
\end{lemma}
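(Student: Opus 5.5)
The plan is a direct lattice argument in the basis $\{v\}_{v\in\VG}$ of $\Z^{\VG}$. Throughout, write $n=|\VG|$ and $D=\diag(N/r_v)$, so that $\MSym=\MBL D$.

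\emph{Integrality and symmetry.} The $(w,v)$ entry of $\MBL$ is $\frac1{r_w}(\Vlift v\cdot\Vlift w)$, so the corresponding entry of $\MSym$ is $\frac{N}{r_vr_w}(\Vlift v\cdot\Vlift w)$, which is visibly symmetric in $v,w$.
\begin{itemize}
\item For integrality, use $G$-invariance of the intersection pairing: $\Vlift v\cdot\Vlift w=r_v\,(v_0\cdot\Vlift w)$.
\item Hence the entry equals $\frac{N}{r_w}(v_0\cdot\Vlift w)\in\Z$, since $r_w\mid N$.
\end{itemize}

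\emph{Kernels.}
\begin{itemize}
\item The right kernel of $\MBL$: $\alpha^G$ is the restriction of $\alpha$ to $(\Z^V)^G=\Z^{\VG}$ (via $v\mapsto\Vlift v$). Also $\ker\alpha=\Z\mathbf m$ is spanned by the $G$-invariant vector $\sum m_v\Vlift v$. So $\ker\MBL=\Z\,(m_v)_v$, which has rank~$1$.
\item The kernel of $\MSym$: it is $D^{-1}\ker\MBL\cap\Z^{\VG}$, spanned by a multiple of $(r_vm_v)_v$.
\item The left kernel of $\MBL$: it is spanned by the primitive vector $(r_vm_v/m')_v$, since $\beta^G\circ\alpha^G=0$ and the rank is $n-1$.
\end{itemize}
Consequently $\coker(\MBL)_\tors=\ker\beta^G/\im\MBL$ and $\coker(\MSym)_\tors=\ker\beta^G/\im\MSym$.

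\emph{The exact sequence.} The key step is to use the symmetry $\MSym=\MSym^T=D\,\MBL^T$, which gives $\im\MSym=D(\im\MBL^T)$. Write $S'=\{x\in\Z^{\VG}:\sum m_vx_v=0\}$.
\begin{itemize}
\item $S'$ is the saturation of $\im\MBL^T$, because its orthogonal vector $(m_v)$ spans the left kernel of $\MBL^T$ and is primitive. Hence $S'/\im\MBL^T=\coker(\MBL^T)_\tors\cong\coker(\MBL)_\tors$, using Smith normal form for the last isomorphism.
\item $D$ is injective and $\beta^G(Dx)=\sum r_vm_v\frac N{r_v}x_v=N\sum m_vx_v$. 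So $D$ maps $S'$ into $\ker\beta^G$, and more precisely $DS'=D\Z^{\VG}\cap\ker\beta^G$.
\item Therefore $D$ induces an injection $S'/\im\MBL^T\hookrightarrow\ker\beta^G/\im\MSym$ with cokernel $\ker\beta^G/DS'$.
\end{itemize}

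\emph{Identifying the cokernel.} It remains to identify $\ker\beta^G/(D\Z^{\VG}\cap\ker\beta^G)$. It sits in the exact sequence
$$0\to\ker\beta^G/DS'\to\Z^{\VG}/D\Z^{\VG}=\bigoplus_v\Z/\tfrac N{r_v}\Z\to\Z^{\VG}/(\ker\beta^G+D\Z^{\VG})\to0.$$
The last term is $\beta^G(\Z^{\VG})/\beta^G(D\Z^{\VG})$.
\begin{itemize}
\item $\beta^G(\Z^{\VG})=m'\Z$ by definition of $m'$.
\item $\beta^G(D\Z^{\VG})=N\gcd(m_v)\Z=N\Z$, since $\gcd(\mathbf m)=1$.
\end{itemize}
So the last term is $\Z/\tfrac N{m'}\Z$. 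Splicing this with the injection from the previous step gives the four-term sequence claimed in Lemma~\ref{BLsym}.

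\emph{Main obstacle.} The main subtlety is the order in which $\coker(\MBL)_\tors$ appears. The naive inclusion $\im\MSym\subset\im\MBL$ puts it as a quotient, not a subgroup. The fix is to pass to $\MBL^T$ through the symmetry of $\MSym$, and then use that torsion of cokernels is invariant under transposition. One must also check carefully that the map $\coker(\MSym)_\tors\to\bigoplus\Z/\frac N{r_v}\Z$ is well defined on $\ker\beta^G/\im\MSym$, i.e.\ that $\im\MSym\subset D\Z^{\VG}$; this holds by the factorisation $\MSym=D\,\MBL^T$.
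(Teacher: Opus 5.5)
Your proof is correct and follows essentially the same route as the paper. The paper also uses the symmetry to write $\MSym=\diag(N/r_v)\cdot\MBL^T$, applies Lemma~\ref{DMfor} with $M=\MBL^T$ and $\mathbf l=\mathbf m$, and gets $\hat d=N/m'$; your saturation argument via $S'$, $DS'=D\Z^{\VG}\cap\ker\beta^G$ and $\beta^G(\Z^{\VG})/\beta^G(D\Z^{\VG})\iso\Z/\tfrac N{m'}\Z$ is that lemma unrolled by hand, and your explicit checks of integrality and of the rank-one kernel (via $\ker\alpha=\Z\mathbf m$ restricted to $G$-invariants) fill in points the paper leaves implicit.
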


\begin{proof}
From $(\MBL)_{wv}\,r_w = \Vlift{v}\cdot\Vlift{w} = (\MBL)_{vw}\,r_v$ we see that
$\MSym$ is symmetric.
As $\mathbf{m}=(m_v)$ is in the right kernel of $\MBL$, the vector $(m_v r_v)$
is in the kernel of $\MSym$.
Since $\MSym$ is symmetric, $\MSym = \MSym^T = \mathrm{diag}(N/r_v)\cdot\MBL^T$,
and we get the claim from Lemma~\ref{DMfor} applied with $M=\MBL^T$, $D=\mathrm{diag}(N/r_v)$, $\mathbf{l}=\mathbf{m}$, and parameters
$$
  R=\prod_{v\in \VG}\frac{N}{r_v}, \quad
  \hat{d}_v = \frac{R}{N/r_v}\cdot{m_v} = \frac{R}{N}\cdot{m_v\,r_v}, \quad
  \gcd_{v\in \VG}(\hat{d}_v) = \frac{Rm'}{N}, \quad
  \hat{d} = \frac{R}{\gcd(\hat{d}_v)} = \frac N{m'}.
$$
\end{proof}

\comment
\begin{remark}
The short exact sequence can be also written as
$$
  0 \to \coker(\MBL)_{\tors}
    \xrightarrow{\bar{D}}
    \coker(\MSym)_{\tors}
    \xrightarrow{\gamma}
    \ker\left(
      \bigoplus_{v\in \VG} \tfrac{r_v}{N}\Z/\Z
      \xrightarrow{\>\Sigma_{\mathbf{m}}\>}
      \Q/\Z
    \right) \to 0,
$$
where $\Sigma_{\mathbf{m}}(a_v)_{v\in \VG} = \sum_{v\in \VG} m_v\,a_v$
and $\gamma$ sends the class of $(a_v)_{v\in \VG}$ to $\bigl(\frac{r_v\, a_v}{N} \bmod\Z\bigr)_{v\in \VG}$.
\end{remark}
\endcomment

\begin{theorem}[Arithmetic case]
\label{tarith}
In the above notation,
$$
  \Bigl|\frac{\ker(\beta^G)}{\im(\alpha^G)}\Bigr|
  \>=\> \prod_{v\in \VG} m_v^{\deg v - 2} \cdot
      \det\bigl(\langle\cdot,\cdot\rangle\big|_{H_1(\DG,\Z)}\bigr) \cdot
      m' \cdot
      \frac{\prod_{e\in \EG} r_e}{\prod_{v\in \VG} r_v}.
$$
\end{theorem}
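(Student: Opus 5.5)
The plan is to reduce Theorem~\ref{tarith} to the geometric formula, Theorem~\ref{TNC}, applied to the symmetric matrix $\MSym$, and then do the bookkeeping with Lemma~\ref{BLsym}.

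\textbf{Step 1.} Since $\mathbf m$ spans the kernel of $\MBL$ and $\beta^G$ realises the free part of its cokernel (exactly as in the proof of Theorem~\ref{flowcut}(2)), we have $\ker\beta^G/\im\alpha^G=\coker(\MBL)_{\tors}$. Lemma~\ref{BLsym} then gives
$$
  \bigl|\coker(\MBL)_{\tors}\bigr| \>=\> \bigl|\coker(\MSym)_{\tors}\bigr|\cdot \frac{N}{m'}\cdot\prod_{v\in\VG}\frac{r_v}{N}.
$$

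\textbf{Step 2.} Apply Theorem~\ref{TNC} to $\MSym$.
\begin{itemize}
\item Its kernel is generated by the primitive vector $\tilde m_v=r_vm_v/m'$.
\item $\Vlift v\cdot\Vlift w=\sum_e r_e$, the sum over the edge orbits $e$ of $\DG$ joining $v$ and $w$. So the graph $\D(\MSym)$ has vertex set $\VG$, and each non-loop edge orbit $e$ from $v$ to $w$ in $\DG$ is replaced by a bundle of $k_e=Nr_e/(r_vr_w)$ parallel edges.
\item Each edge in the bundle has length $s=1/(\tilde m_v\tilde m_w)=m'^2/(r_vr_wm_vm_w)$.
\item Loop orbits of $\DG$ (edge orbits inside one vertex orbit) are invisible to $\D(\MSym)$, since only off-diagonal entries give edges.
\end{itemize}

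\textbf{Step 3.} Compare the two toric determinants. I would use the weighted Kirchhoff matrix-tree identity
$$
  \det\bigl(\lara|_{H_1}\bigr)=\prod_e s_e\cdot\sum_T\prod_{e\in T}s_e^{-1},
$$
where $T$ runs over spanning trees. This is standard, and can be derived from the orthogonal decomposition \ref{pairings}(4) together with Cauchy--Binet for $\det(M_\lambda^TQM_\lambda)$.

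Loops in $\DG$ split off orthogonally. A loop $e$ at $v$ contributes a factor $1/(r_em_v^2)$ to the toric part. It also contributes $m_v^2$ to the unipotent part (it adds $2$ to $\deg v$) and $r_e$ to the arithmetic part, so its net contribution is $1$. We may therefore assume $\DG$ has no loops.

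For each non-loop edge orbit, the bundle in $\D(\MSym)$ has total conductance $k_e/s=(N/m'^2)\,r_em_vm_w$, which is $N/m'^2$ times the conductance of $e$ in $\DG$. Every spanning tree has $|\VG|-1$ edges, so the tree sums differ by the factor $(N/m'^2)^{|\VG|-1}$. The prefactors $\prod s_e$ are explicit on both sides. Likewise $\prod_v\tilde m_v^{\deg_{\MSym} v-2}$ is explicit, with $\deg_{\MSym}v=\sum_{e\ni v}k_e$.

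\textbf{Step 4.} Multiply everything together. The exponents of $N$, $m'$, $r_v$ and $m_v$ should collapse to
$$
  \prod_{v\in\VG} m_v^{\deg v-2}\cdot \det\bigl(\lara|_{H_1(\DG,\Z)}\bigr)\cdot m'\cdot\frac{\prod_{e\in\EG} r_e}{\prod_{v\in\VG} r_v}.
$$
For this I would use $\sum_v\deg v=2|\EG|$, and the fact that the $k_e$'s appear both in the exponents of $\prod s$ and in the degrees, so those contributions cancel.

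\textbf{Main obstacle.} I expect the hardest part to be the exponent bookkeeping in Steps 3--4, in particular making sure the parallel-edge bundles and the loops are accounted for consistently. If the matrix-tree route becomes messy, the alternative is the "standard reduction steps for weighted graphs" of \S\ref{stamgraph}: replace $k$ parallel edges of length $s$ by a single edge of length $s/k$ while tracking the determinant factor $s^{k-1}k$, and then compare $\D(\MSym)$ with $\DG$ edge orbit by edge orbit.
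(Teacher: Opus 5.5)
Your proposal is correct and follows the paper's proof. Both go through Lemma~\ref{BLsym} to pass to $\MSym$, apply Theorem~\ref{TNC} to its weighted graph, compare that graph with $\DG$, and finish with exponent bookkeeping; the bookkeeping collapses as you predict, both sides reducing to $m'\prod_v (m_v^2r_v)^{-1}$ times the conductance-weighted spanning-tree sum of $\DG$.

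The only difference is how the comparison is done. The paper uses the $n$-parallel law and vertex/edge scaling of Corollary~\ref{tamlaws}, and keeps loop orbits in the $\MSym$ graph; this is harmless, since each such loop has $s\,(r_vm_v/m')^2=1$. You instead delete the loops of $\DG$ first and compare tree sums directly via the matrix-tree theorem, which is what those laws rest on anyway.
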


\begin{proof}
We use the formalism of weighted graphs and their Tamagawa numbers, see \S\ref{stamgraph}.
Consider the following three weighted graphs. In each case the vertex set is $\VG$,
with multiplicities as in column~2. Each edge orbit $\e vw\in\EG$ contributes the
number of edges from $v$ to $w$ given in column~3, each of length as in column~4.

\begin{center}
\begin{tabular}{l@{\quad}c@{\quad}c@{\quad}c}
  Weighted graph & multiplicity of $v$ & \#edges per orbit $\e vw$ & edge length\\
  \hline
  $M_{\sym}$ graph $\cT_1$ & $m_vr_v/m'$ & $Nr_e/(r_vr_w)$ & $m'^2/(m_vr_v\cdot m_wr_w)$ \\
  merged graph $\cT_2$     & $m_vr_v/m'$ & $1$             & $m'^2/(Nr_em_vm_w)$ \\
  quotient graph $\DG$   & $m_v$       & $1$             & $1/(r_em_vm_w)$ \\
\end{tabular}
\end{center}

\noindent
Write $\beta_1 = |\EG|-|\VG|+1$ for the first Betti number of $\DG$.
We compute
$$
\begin{aligned}
  \Bigl|\frac{\ker(\beta^G)}{\im(\alpha^G)}\Bigr|
  &\>=\>|\coker(\MBL)_{\tors}|
  \>\stackrel{\ref{BLsym}}{=}\>\frac{\prod_{v\in\VG} r_v}{m'\cdot N^{|\VG|-1}}\cdot|\coker(\MSym)_{\tors}|\\
  &\>\stackrel{\ref{TNC}}{=}\>
  \frac{\prod_{v\in\VG} r_v}{m'\cdot N^{|\VG|-1}}\cdot t(\cT_1)
  \>\stackrel{\ref{tamlaws}(d)}{=}\>
  \frac{\prod_{v\in\VG} r_v}{m'\cdot N^{|\VG|-1}}\cdot
  \prod_{e=\e vw\in\EG}\frac{Nr_e}{r_vr_w}\cdot t(\cT_2)\\
  &\>\stackrel{\ref{tamlaws}(f,g)}{=}\>
  \frac{\prod_{v\in\VG} r_v}{m'\cdot N^{|\VG|-1}}\cdot
  \prod_{\e vw\in\EG}\frac{Nr_{\e vw}}{r_vr_w}\cdot
  \prod_{v\in\VG}\Bigl(\frac{r_v}{m'}\Bigr)^{\deg v - 2}\cdot \Bigl(\frac{m'^2}{N}\Bigr)^{\beta_1}\cdot t(\DG)\\
  &\>=\>
  m'\cdot\frac{\prod_{e\in\EG} r_e}{\prod_{v\in\VG} r_v}\cdot t(\DG),
\end{aligned}
$$
using $\prod_{\e vw}r_vr_w = \prod_v r_v^{\deg v}$, $|\EG|-\beta_1 = |\VG|-1$, and
$\sum_v(\deg v - 2) = 2|\EG|-2|\VG| = 2(\beta_1 - 1)$ in the final equality.
\end{proof}

\section{Cohomological part}
\label{scoh}

We now determine the cohomological term
$\ker\bigl(\psi\colon H^1(G,\im\alpha)\to H^1(G,\ker\beta)\bigr)$, and identify it
with $\Psi_J$ of Definition~\ref{defpsi}. We keep the setting and the notation of \S\ref{s:arith}.

\begin{theorem}\label{kerpsi}
Pick a representative $v_0\in V$ for every orbit $v\in\VG$, let $H_v=\Stab_G(v_0)$, and
for every $v'\in v$ pick $g_{v'\shortto v_0}\in G$ that takes $v'$ to $v_0$.

\begin{itemize}
\item[(1)] The group $H^1(G, \ker\beta)$ is canonically isomorphic to $\Z/m'\Z$.

\item[(2)] The group $H^1(G, \im\alpha)$ fits in an exact sequence
$$
  0 \to H^1(G, \im\alpha) \xrightarrow{\>\partial_\psi\>} \Hom(G, \Q/\Z) \xrightarrow{\>f\>}
    \bigoplus_{v \in \VG} \Hom(H_v, \Q/\Z), \qquad f(\chi)_v=m_v\cdot\chi|_{H_v}.
$$

\item[(3)]
Explicitly, let $\chi\in\ker f$, lift it to a function $\tilde\chi\colon G\to\Q$, and define for $\sigma\in G$
$$
  \xi = \bigl(\,m_v\,\tilde\chi(g_{v'\shortto v_0})\bigr)_{v'} \in\Q^V, \qquad
  \omega(\sigma) \>=\> \xi\!-\!\sigma(\xi)\!+\!\tilde\chi(\sigma)\,\mathbf{m}, \qquad
  c(\sigma) \>=\> \alpha(\omega(\sigma)).
$$
Then $\omega\in C^1(G,\Z^V)$ is a 1-cocycle, $c\in Z^1(G,\im\alpha)$, and $\partial_\psi([c])=\chi$.

\item[(4)]
With $\chi$, $\tilde\chi$ and $\xi$ as in \textup{(3)},
\begin{equation}\label{starstar}
  \psi(\chi) \equiv \sum_{v\in\VG} m_v r_v\,(\alpha(\xi))_{v_0}
  \equiv
  \sum_{v \in \VG} m_v r_v
  \sum_{w' \in V} m_{w'}\,
  (v_0\cdot w')\,
  \tilde\chi(g_{w'\shortto w_0})
  \pmod{m'}.
\end{equation}
\end{itemize}
\end{theorem}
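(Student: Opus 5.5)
The plan is to compute everything from the two short exact sequences of $G$-modules
$$
  0\to\ker\beta\to\Z^V\xrightarrow{\beta}\Z\to0,\qquad 0\to\Z\mathbf{m}\to\Z^V\xrightarrow{\alpha}\im\alpha\to0,
$$
where the second uses $\ker\alpha=\Z\mathbf{m}$ and that $\alpha$ is $G$-equivariant. The permutation module $\Z^V=\bigoplus_{v\in\VG}\Ind_{H_v}^G\Z$ has $H^1(G,\Z^V)=\bigoplus_v\Hom(H_v,\Z)=0$ for profinite $G$ with continuous cochains. We also use $H^1(G,\Z)=0$ and $H^2(G,\Z)=\Gvee$ via the connecting map of $0\to\Z\to\Q\to\Q/\Z\to0$.

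For (1), take the long exact sequence of the first sequence. It gives $H^1(G,\ker\beta)=\coker\bigl((\Z^V)^G\to\Z\bigr)$. Now $(\Z^V)^G$ is spanned by the orbit sums $\Vlift v$, and $\beta(\Vlift v)=r_vm_v$, so the cokernel is $\Z/m'\Z$, canonically.

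For (2), take the long exact sequence of the second sequence. Since $H^1(G,\Z^V)=0$, the connecting map $H^1(G,\im\alpha)\to H^2(G,\Z\mathbf{m})\cong\Gvee$ is injective. Its image is the kernel of $H^2(G,\Z)\to H^2(G,\Z^V)=\bigoplus_v H^2(H_v,\Z)=\bigoplus_v\Hom(H_v,\Q/\Z)$, by Shapiro. This map sends $\chi$ to $(m_v\chi|_{H_v})_v$, because the inclusion $\Z\to\Z^V$ is $1\mapsto\mathbf{m}$ and restriction to the $v_0$-coordinate corresponds to restriction to $H_v$. This gives $f$ and defines $\partial_\psi$. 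The identification $H^2(G,\Z)=\Gvee$ should be normalised so that the signs match (3).

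For (3), make the snake-lemma construction explicit. Given $\chi\in\ker f$, the $2$-cocycle $d\tilde\chi$ takes values in $\Z$. Its image in $\Z^V$ is $d(\tilde\chi\,\mathbf{m})$, and we must write it as the coboundary of an integral cochain. The vector $\xi\in\Q^V$ is the natural $0$-cochain correcting the $\Q$-valued $\tilde\chi\mathbf{m}$. The key check is that
$$
  \omega(\sigma)_{v'}=m_v\bigl(\tilde\chi(g_{v'\to v_0})-\tilde\chi(g_{\sigma^{-1}v'\to v_0})+\tilde\chi(\sigma)\bigr)
$$
is an integer. Indeed, $g_{v'\to v_0}\,\sigma\,g_{\sigma^{-1}v'\to v_0}^{-1}\in H_v$, and $m_v\chi$ vanishes on $H_v$, so the integrality follows from additivity of $\chi$. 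The cocycle identity for $\omega$ is formal, because $\xi-\sigma\xi$ is a coboundary and $\tilde\chi\mathbf{m}$ is a cocycle modulo $\Z$. Since $\mathbf{m}\in\ker\alpha$, the cochain $c=\alpha\circ\omega$ is a cocycle. Then $\omega$ lifts $c$, and $d\omega=d(\tilde\chi)\mathbf{m}$, which by definition says $\partial_\psi[c]=\chi$. I expect this integrality and the bookkeeping of the choices $g_{v'\to v_0}$ to be the main technical obstacle. One also has to track the left versus right action convention in $\sigma(\xi)$.

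For (4), we compute $\psi[c]$, where $\psi$ is induced by the inclusion $\im\alpha\subseteq\ker\beta$. By (1), we pass through the connecting map $\Z\to H^1(G,\ker\beta)$, going backwards. Write $c=\alpha\omega$ with $\alpha(\omega)=\alpha(\xi)-\sigma\alpha(\xi)$, using $\alpha\mathbf{m}=0$. So $c$ is the coboundary of the $0$-cochain $y=\alpha(\xi)\in\Q^V$, viewed inside $\ker\beta\otimes\Q$. The standard recipe then identifies the class in $\Z/m'\Z$ as follows. Choose an integral $z\in\Z^V$ with $z-y\in(\Q^V)^G$. Then $[c]\mapsto\beta(z)\bmod m'$, up to sign conventions.

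More cleanly, $c(\sigma)=(1-\sigma)y$ where $y$ has $\Z$-valued coboundary. The isomorphism $H^1(G,\ker\beta)\cong\Z/m'$ sends such a class to $\beta(\tilde y)$, where $\tilde y\in\Z^V$ is congruent to $y$ modulo $G$-invariants. Since $\beta$ of a $G$-invariant rational vector is a rational multiple of the numbers $r_vm_v$, one must reduce carefully. Evaluating $\beta$ orbitwise, the terms combine to $\sum_v m_v r_v\,\alpha(\xi)_{v_0}$. The justification is that $\alpha(\xi)_{v'}-\alpha(\xi)_{v_0}\in\Z$ by the same integrality as in (3). Expanding $\alpha(\xi)_{v_0}=\sum_{w'}(v_0\cdot w')\,m_{w'}\,\tilde\chi(g_{w'\to w_0})$ gives the second expression in \eqref{starstar}. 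Finally, I would check that the result is independent of the lift $\tilde\chi$ modulo $m'$.
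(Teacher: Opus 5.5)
Your proposal is correct and follows the paper's proof essentially step by step. It uses the same two long exact sequences, with $H^1(G,\Z^V)=0$ from Shapiro, for (1) and (2). For (3) it has the same integrality check via $g_{v'\to v_0}\,\sigma\,g_{\sigma^{-1}v'\to v_0}^{-1}\in H_v$ and the same formula $d\omega=(d\tilde\chi)\,\mathbf m$. In (4) it makes the same comparison of an integral primitive of $c$ with the rational primitive $\alpha(\xi)$ modulo $G$-invariant vectors. The only difference is that you produce the integral primitive directly from $\alpha(\xi)_{v'}-\alpha(\xi)_{v_0}\in\Z$, whereas the paper gets it from $H^1(G,\Z^V)=0$.

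Two small bookkeeping points. First, $\omega$ is only an integral cochain whose coboundary lies in $\Z\mathbf m$ (as your own formula for $d\omega$ shows), not a genuine cocycle. Second, since $c$ is the coboundary of $-z$, the standard connecting map sends $[c]$ to $-\beta(z)$, and this is exactly what gives the $+$ sign in the congruence of part (4).
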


\begin{lemma}\label{lem:shapiro}
Let $G$ act transitively on a finite set $X$, with stabiliser $H$. Then
$$
  H^1(G,\Z^X) = 0 \qquad\text{and}\qquad
  H^2(G,\Z^X) \>\cong\> H^\vee=\Hom(H,\Q/\Z).
$$
Under the inclusion $\Z\hookrightarrow\Z^{X}$ sending
$1\mapsto\sum_{x\in X}x$, the induced map
$H^2(G,\Z)\to H^2(G,\Z^{X})$ corresponds to restriction
$\Gvee\to H^\vee$.
\end{lemma}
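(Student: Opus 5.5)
The plan is to identify $\Z^X$ with the induced (= coinduced, since $X$ is finite) module $\Ind_H^G\Z$, and then apply Shapiro's lemma. Concretely, $\Z^X\cong\Z[G/H]\cong\Z[G]\otimes_{\Z[H]}\Z$. Because $[G:H]<\infty$, this is isomorphic to $\Hom_{\Z[H]}(\Z[G],\Z)$, via the map sending $x\mapsto$ the indicator function of the coset $x$. If $G$ is profinite and acts continuously, $H$ is open, and Shapiro's lemma holds for continuous cohomology of discrete modules. It gives $H^i(G,\Z^X)\cong H^i(H,\Z)$ for all $i$, the isomorphism being restriction to $H$ followed by evaluation at the coordinate of the fixed point $x_0=H$.

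Next we compute $H^i(H,\Z)$ for $i=1,2$. Continuous homomorphisms from a profinite group to $\Z$ are trivial, since the image would be a compact, hence finite, subgroup of $\Z$. So $H^1(H,\Z)=\Hom_{\rm cts}(H,\Z)=0$. For $H^2$, use the short exact sequence $0\to\Z\to\Q\to\Q/\Z\to0$. The module $\Q$ is uniquely divisible, so $H^i(H,\Q)=0$ for $i\ge1$, since profinite cohomology is torsion in positive degrees. The connecting map therefore gives $H^2(H,\Z)\cong H^1(H,\Q/\Z)=\Hom(H,\Q/\Z)=H^\vee$. If $G$ is an abstract group instead, the same argument works provided $G$ is finite, or one takes the action to factor through a finite quotient; in the intended application $G$ is a Galois group, so I will state the lemma in the profinite setting.

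For the compatibility statement, the composite $\Z\to\Z^X\to\Z$ (inclusion $1\mapsto\sum_x x$, followed by projection onto the $x_0$-coordinate) is the identity and is $H$-equivariant. By the explicit form of the Shapiro isomorphism, the map $H^2(G,\Z)\to H^2(G,\Z^X)\cong H^2(H,\Z)$ is therefore $\res^G_H$ composed with the map induced by this composite, which is just $\res^G_H$. It remains to check that the identification $H^2(-,\Z)\cong\Hom(-,\Q/\Z)$ commutes with restriction. This holds because the connecting homomorphism $\delta$ is natural with respect to restriction, and restriction on $H^1(-,\Q/\Z)$ is literally restriction of characters. So the induced map corresponds to $\chi\mapsto\chi|_H$.

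The only real care point is the naturality in this last step: the Shapiro isomorphism must be taken as restriction followed by projection to the $x_0$-coordinate, and one must verify that the diagonal embedding of $\Z$ projects to the identity there. Everything else is standard.
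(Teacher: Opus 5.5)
Your proof is correct and follows the paper's argument: Shapiro's lemma, the sequence $0\to\Z\to\Q\to\Q/\Z\to 0$, and naturality of the Shapiro isomorphism via the $H$-equivariant projection to the $x_0$-coordinate. You spell out the compatibility step in more detail than the paper does. One small caveat: your aside that for an abstract group $G$ it suffices for the action to factor through a finite quotient is wrong (e.g.\ $H^1(\Z,\Z)=\Z$ for the trivial action on a point), but this is harmless since you work in the profinite setting.
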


\begin{proof}
Since $\Z^{X}\cong\Ind_{H}^G\Z$, Shapiro's lemma
(see e.g.\ \cite[1.6.3]{NSW}) gives
$H^q(G,\Z^{X})\cong H^q(H,\Z)$ for all $q$.
Then $H^1(H,\Z)=\Hom(H,\Z)=0$ and
$H^2(H,\Z)\cong H^\vee$ by the long exact sequence of
$0\to\Z\to\Q\to\Q/\Z\to 0$.
The last assertion follows from the naturality of Shapiro's isomorphism:
the inclusion $\Z\hookrightarrow\Ind_{H}^G\Z$ corresponds on
$H$-modules to the identity $\Z\to\Z$, composed with
restriction $G\to H$.
\end{proof}

\begin{proof}[Proof of Theorem~\ref{kerpsi}]
As a $G$-module, $\Z^V=\bigoplus_{v\in\VG}\Z[G/H_v]$.
Applying Lemma~\ref{lem:shapiro} to the summands, we find
$H^1(G,\Z^V)=0$ and $H^2(G,\Z^V)\cong\bigoplus_{v\in\VG}H_v^\vee$.

\smallskip\noindent (1)
Since $\gcd(m_v)=1$, the map $\beta$ is surjective and
$0\to\ker\beta\to\Z^V\xrightarrow{\beta}\Z\to 0$ is exact.
The long exact sequence in $G$-cohomology, together with $H^1(G,\Z^V)=0$, gives
$H^1(G,\ker\beta)\cong\Z/\im\beta^G$. Since $\beta^G(\Vlift{v})=r_v m_v$,
$\im\beta^G=m'\Z$, whence $H^1(G,\ker\beta)\cong\Z/m'\Z$.

\smallskip\noindent (2)
Recall that $\ker\alpha=\Z\mathbf m$, and $\mathbf m$ is $G$-invariant.
Since $H^1(G,\Z^V)=0$, the short exact sequence
$0\to\mathbf m\Z\to\Z^V\xrightarrow{\alpha}\im\alpha\to 0$ yields an exact sequence
$$
  0 \to H^1(G,\im\alpha)\xrightarrow{\>\partial_\psi\>} H^2(G,\Z)
   \xrightarrow{\>f\>} H^2(G,\Z^V).
$$
Identify $H^2(G,\Z)\cong\Gvee$ and $H^2(G,\Z^V)\cong\bigoplus_v H_v^\vee$.
The $v$-component of the map $\Z\mathbf{m}\to\Z^V$ sends
$\mathbf{m}\mapsto m_v\Vlift{v}$, factoring as
$\Z\xrightarrow{\times m_v}\Z\xrightarrow{1\mapsto\Vlift{v}}\Z^{v}$.
The second map induces restriction $\Gvee\to H_v^\vee$ on $H^2$ by \ref{lem:shapiro},
so the $v$-component of $f$ is $\chi\mapsto m_v\,\chi|_{H_v}$.
Hence $H^1(G,\im\alpha)\cong\ker f$.

\smallskip\noindent (3)
Recall that $G$ acts on $\Q^V$ by $(\sigma\cdot\xi)_{v'}=\xi_{\sigma^{-1}v'}$.
Let $w\in V$. Since $m_w\chi: G\to \Q/\Z$ is a homomorphism, trivial on $H_w$ for $\chi\in\ker f$, we have
\begin{equation}\label{sigmaprod}
  \text{
  If $\sigma_1,\dots,\sigma_r\in G$ satisfy $\sigma_1\cdots\sigma_r\in H_w$,
  then $m_w\sum_i\tilde\chi(\sigma_i)\in\Z$.
  }
\end{equation}
\textit{Integrality of $\omega(\sigma)$.}
Applying \eqref{sigmaprod} to
$g_{\sigma^{-1}w'\shortto w_0}^{-1}\cdot g_{\sigma^{-1}w'\shortto w_0} \in H_w$ and
$g_{w'\shortto w_0}\cdot\sigma\cdot g_{\sigma^{-1}w'\shortto w_0}^{-1} \in H_w$,
we get that for any $w'\in w\in V$,
$$
  \omega(\sigma)_{w'} =
    \xi_{w'}-\xi_{\sigma^{-1}w'}+m_w\tilde\chi(\sigma) \equiv
  m_w\bigl(\tilde\chi(g_{w'\shortto w_0})
           +\tilde\chi(\sigma)
           -\tilde\chi(g_{\sigma^{-1}w'\shortto w_0})\bigr) \equiv 0\pmod\Z.
$$

\textit{Cocycle $c\in Z^1(G,\im\alpha)$.}
Since $\omega$ is integral, $c = \alpha\circ\omega$ lands in $\im\alpha$. Now,
$G$ preserves intersection numbers, so $\alpha$ is $G$-equivariant on $\Q^V$:
$$
  (\alpha(\sigma\xi))_{v'}
  =\sum_{w'}(v'\cdot w')\,\xi_{\sigma^{-1}w'}
  =\sum_{w''}(v'\cdot\sigma w'')\,\xi_{w''}
  =\sum_{w''}(\sigma^{-1}v'\cdot w'')\,\xi_{w''}
  =(\sigma\,\alpha(\xi))_{v'}.
$$
Using this and $\alpha(\mathbf{m})=0$, we get
$$
  c(\sigma) = \alpha(\omega(\sigma)) = \alpha(\xi-\sigma\xi+\tilde\chi(\sigma)\mathbf{m})
            = \alpha(\xi)-\sigma(\alpha(\xi)),
$$
so $c$ is automatically a 1-cocycle (despite $\xi$ being rational-valued).

\textit{Image of $c$.}
The connecting homomorphism $\partial_\psi$ works as follows: given
$c\in H^1(G,\im\alpha)$, lift it to a cochain $\omega\colon G\to\Z^V$
with $\alpha(\omega(\sigma))=c(\sigma)$; then
$(\sigma,\tau)\mapsto\omega(\sigma)+\sigma\omega(\tau)-\omega(\sigma\tau)$
is a 2-cocycle with values in $\ker\alpha=\Z\mathbf{m}$, representing
$\partial_\psi([c])\in H^2(G,\Z\mathbf{m})\cong H^2(G,\Z)\cong\Gvee$.
Computing directly with $\omega(\sigma)=\xi-\sigma\xi+\tilde\chi(\sigma)\mathbf{m}$,
$$
  \omega(\sigma)+\sigma\omega(\tau)-\omega(\sigma\tau)
  = \bigl(\tilde\chi(\sigma)+\tilde\chi(\tau)-\tilde\chi(\sigma\tau)\bigr)\mathbf{m}
$$
since the $\xi$ terms cancel. By definition, $F(\sigma,\tau)=\tilde\chi(\sigma)+\tilde\chi(\tau)
-\tilde\chi(\sigma\tau)$ is the 2-cocycle representing $\chi$ under
$H^2(G,\Z)\cong\Gvee$, so $\partial_\psi([c])=\chi$.

\smallskip\noindent (4)
The isomorphism $H^1(G,\ker\beta)\cong\Z/m'\Z$ in~(1) comes from a connecting homomorphism,
and is given as follows: for a cocycle $c\colon G\to\ker\beta$,
choose $u\in\Z^V$ with $c(\sigma)=\sigma u-u$ (possible since $H^1(G,\Z^V)=0$);
then $\beta(u)$ is $G$-invariant
and its class in $\Z/m'\Z$ is the image of $[c]$; that is,
$\psi(\chi)\equiv\beta(u)\pmod{m'}$.

By (3), $c(\sigma)=\alpha(\xi)-\sigma(\alpha(\xi))$ is in $\im\alpha$.
In other words, $-\alpha(\xi)$ behaves exactly like $u$ is supposed to, except that it is not $\Z$-valued.
However, $a=u+\alpha(\xi)\in\Q^V$ is $G$-invariant, hence constant on each $G$-orbit:
$a_v=u_{v_0}+(\alpha(\xi))_{v_0}$ for $v\in\VG$.
Using $\beta\alpha=0$,
$$
  \beta(u) \>=\> \beta(a)-\beta(\alpha\xi)
  \>=\> \sum_{v\in\VG} m_v r_v\,a_v
  \>=\> \sum_{v\in\VG} m_v r_v\,u_{v_0}\>+\>\sum_{v\in\VG} m_v r_v\,(\alpha\xi)_{v_0}.
$$
The first sum lies in $m'\Z$ since $m'\mid m_v r_v$ for every $v\in\VG$, so
$$
  \psi(\chi) \>\equiv\> \sum_{v\in\VG} m_v r_v\,(\alpha\xi)_{v_0}\pmod{m'},
$$
which is the first equality in \eqref{starstar}. The second follows by plugging in
$(\alpha\xi)_{v_0}=\sum_{w'\in V}(v_0\cdot w')\,\xi_{w'}$ and
$\xi_{w'}=m_{w'}\,\tilde\chi(g_{w'\shortto w_0})$.
\end{proof}

\begin{lemma}\label{charsum2}
Let $v\in\VG$, $\chi\in\Hom(G,\Q/\Z)$
with $\order(\chi|_{H_v})=n_v$, and $g\in G$. Then
$$
  \sum_{h\in H_v}\chi(hg) \>=\> \frac{|G|\,(n_v-1)}{2\,r_v\,n_v} + \frac{|G|}{r_v}\cdot\chi(g)
  \quad\in \Q/\Z.
$$
\end{lemma}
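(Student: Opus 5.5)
Here $G$ is implicitly finite: the sum uses $|G|$, and one can replace $G$ by its finite image acting on $\D$ together with the relevant quotient through which $\chi$ factors. The plan is to reduce the sum over $H_v$ to a sum over the cyclic group $\chi(H_v)$, and then evaluate that sum directly.

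\emph{Step 1 (pull out $g$).} Since $\chi$ is a homomorphism, $\chi(hg)=\chi(h)+\chi(g)$. The term $\chi(g)$ appears $|H_v|$ times, and $|H_v|=|G|/r_v$ by orbit--stabiliser. This gives the second summand $\frac{|G|}{r_v}\chi(g)$, so it remains to show
$$
  \sum_{h\in H_v}\chi(h)=\frac{|G|(n_v-1)}{2r_vn_v}\in\Q/\Z .
$$

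\emph{Step 2 (reduce to the image).} The restriction $\chi|_{H_v}$ has image the cyclic subgroup $\frac1{n_v}\Z/\Z$ of order $n_v$. Each value in the image is attained exactly $|\ker\chi|_{H_v}|=|H_v|/n_v$ times. Hence
$$
  \sum_{h\in H_v}\chi(h)=\frac{|H_v|}{n_v}\sum_{j=0}^{n_v-1}\frac{j}{n_v}.
$$

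\emph{Step 3 (evaluate).} The inner sum is $\frac{n_v-1}{2}$, so the total is $\frac{|H_v|(n_v-1)}{2n_v}=\frac{|G|(n_v-1)}{2r_vn_v}$, as required.

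The one delicate point is that the value must be read correctly in $\Q/\Z$. The passage from Step 2 to Step 3 is harmless: the integer $|H_v|/n_v$ multiplies a rational representative, and the resulting rational number $\frac{|H_v|}{n_v}\cdot\frac{j}{n_v}$ summed over $j$ is computed exactly before reducing mod $\Z$. So no ambiguity arises, provided we keep the canonical representatives $j/n_v$ throughout and only reduce at the end. This is where I would be careful, since the right-hand side $\frac{|G|(n_v-1)}{2r_vn_v}$ can be a half-integer modulo $\Z$ rather than an element of $\frac1{n_v}\Z/\Z$.
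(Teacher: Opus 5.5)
Your proof is correct and follows the paper's argument. The paper likewise observes that $\chi(hg)=\chi(h)+\chi(g)$ takes each value $\tfrac{j}{n_v}+\chi(g)$ exactly $|\ker(\chi|_{H_v})|=\tfrac{|G|}{r_vn_v}$ times and then sums the arithmetic progression; it just does this in one step instead of first separating off the $\chi(g)$ term, and it relies on the same implicit finiteness convention for $G$ that you point out.
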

\begin{proof}
Let $K_v=\ker(\chi|_{H_v})$. The values of $\chi(hg)=\chi(h)+\chi(g)$ as $h$ ranges over $H_v$
take each value $\tfrac{j}{n_v}+\chi(g)$ exactly $|K_v|=\tfrac{|G|}{r_vn_v}$ times, so
$$
  \sum_{h\in H_v}\chi(hg)
  \>=\> \frac{|G|}{r_vn_v}\sum_{j=0}^{n_v-1}\Bigl(\frac{j}{n_v}+\chi(g)\Bigr)
  \>=\> \frac{|G|}{r_vn_v}\cdot\frac{n_v-1}{2} \>+\> \frac{|G|}{r_v}\cdot\chi(g).
$$
\end{proof}

\begin{theorem}\label{psifor}
In the setting of Theorem~\ref{kerpsi}, assume $G$ acts without
inversion on $\D$.
Let $\chi \in \im(\partial_\psi)\subset\Hom(G, \Q/\Z)$.
For $v \in \VG$ write $n_v=\order(\chi|_{H_v})$.
Then
\begin{enumerate}
\item
We have
$$
  \psi(\chi) \>\equiv\> \frac{m'}{2} \sum_{v \in \VG}
  \frac{r_v m_v}{m'} \cdot \frac{m_v}{n_v} \cdot
  (v_0 \cdot v_0) \cdot (n_v - 1)
  \quad\pmod{m'}.
$$

\item
All factors $r_v m_v/m'$, $m_v/n_v$, $v_0\cdot v_0$
and $n_v - 1$ are integers.
In particular, $2\psi(\chi) \equiv 0 \pmod{m'}$,
so the image of $\psi : H^1(G, \im\alpha) \to \Z/m'\Z$
is killed by~$2$.
\item
In the notation of Definition~\ref{defpsi},
$$
    \ker\bigl(\psi\colon H^1(G,\im\alpha)\to H^1(G,\ker\beta)\bigr) \>\iso\> \Psi_J.
$$
\end{enumerate}
\end{theorem}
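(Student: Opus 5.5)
The plan is to begin from formula \eqref{starstar} of Theorem~\ref{kerpsi}(4) and evaluate it by averaging over $G$. First I will reduce to $G$ finite: the action on $\D$ and $\chi$ both factor through a finite quotient, and all quantities are unchanged under inflation. Formula \eqref{starstar} depends on the choices $g_{w'\to w_0}$ only modulo $m'$. I will therefore replace each $\tilde\chi(g_{w'\to w_0})$ by an average over all $g\in G$ with $g w'=w_0$. This set is a coset $H_w g_0$, and I will justify the replacement using \eqref{sigmaprod}, which makes $m_w\tilde\chi$ well defined modulo $\Z$ on such cosets. The obstacle is that we only get congruences modulo $\Z$ for $m_w\tilde\chi$, while we need congruences modulo $m'$ after multiplying by $m_vr_v$ and by integers. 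So the bookkeeping must be done carefully, presumably as a rational identity in $\Q/m'\Z$.

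Concretely, I will rewrite the inner sum over $w'$ as a sum over pairs. For each $v$, summing $(v_0\cdot w')\,m_w\tilde\chi(g_{w'\to w_0})$ over $w'$ equals $\frac{r_v}{|G|}\sum_{\sigma\in G}\sum_{w'}(\sigma v_0\cdot \sigma w')\cdots$. This turns the double sum into a sum over edges of $\D$ and the diagonal terms $v_0\cdot v_0$. The idea is that each edge $e$ between $v'$ and $w'$, with $v'\neq w'$, contributes two terms of opposite orientation. Because $G$ acts without inversion, these pair up, and the combination $m_vm_w(\tilde\chi(g_{w'\to w_0})-\tilde\chi(g_{v'\to v_0}))$ telescopes modulo $m'$ along the edge orbit. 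Using $\sum_w m_w(v_0\cdot w)=0$, the off-diagonal contributions should cancel modulo $m'$. Only the self-intersection terms $m_v r_v\,(v_0\cdot v_0)\,m_v\tilde\chi(h)$ with $h$ ranging over $H_v$ survive. I will evaluate these with Lemma~\ref{charsum2}: the average of $\chi$ over $H_v g$ is $\chi(g)+\frac{n_v-1}{2n_v}$. The $\chi(g)$ part combines with the edge terms and cancels. The $\frac{n_v-1}{2n_v}$ part gives exactly the claimed expression $\frac{m'}{2}\sum \frac{r_vm_v}{m'}\frac{m_v}{n_v}(v_0\cdot v_0)(n_v-1)$. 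This cancellation step is the one I expect to be hardest. If the direct pairing is messy, I will try instead to choose $g_{w'\to w_0}$ compatibly along a spanning tree of $\DG$ lifted to $\D$, so that edge terms vanish exactly.

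For (2): $m'\mid r_vm_v$ holds by definition. Also $n_v\mid m_v$, because $\chi\in\ker f$ means $m_v\chi|_{H_v}=0$. Hence every factor is an integer, and $2\psi(\chi)\equiv0$.

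For (3): modulo $m'$, $\psi(\chi)$ equals $\frac{m'}{2}$ times the parity of $S=\sum_v \frac{r_vm_v}{m'}\frac{m_v}{n_v}(v_0\cdot v_0)(n_v-1)$. So $\chi\in\ker\psi$ exactly when $S$ is even. Here $\ker f$ is the condition that all $[\chi]_v=m_v/n_v$ are integers. For a term of $S$ to be odd, each factor must be odd. Thus $v_0\cdot v_0$ and $r_vm_v/m'$ are odd, and $n_v$ is even, which forces $m_v$ even; so $v$ is extremal. For extremal $v$ the term's parity is that of $[\chi]_v=m_v/n_v$. Indeed, if $n_v$ is odd then $n_v-1$ is even and $m_v/n_v$ is even, so both readings agree. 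Hence $S\equiv\sum_{v\text{ extremal}}[\chi]_v \pmod 2$. Finally, using the injectivity of $\partial_\psi$ from Theorem~\ref{kerpsi}(2), this identifies $\ker\psi$ with $\Psi_J$.
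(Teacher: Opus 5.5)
Your arguments for (2) and (3) are correct and agree with the paper. The gap is in (1). There, the mechanism you describe (self-intersection terms survive, edge terms cancel) is the wrong way round, and the averaging that produces it is not justified.

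For any actual choice $g_{v_0\to v_0}\in H_v$, the diagonal term $m_vr_v\,(v_0\cdot v_0)\,m_v\tilde\chi(g_{v_0\to v_0})$ lies in $m_vr_v\Z\subseteq m'\Z$, because $m_v\chi|_{H_v}=0$. So it contributes nothing. That \eqref{starstar} is independent of the choices modulo $m'$ does not entitle you to replace each value by a coset average: the individual values agree only modulo $m'$, and averaging divides by $|H_v|$.

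Take the genus~3 example $\pi y^4=f(x)$ of \S\ref{sscoh}, with $G$ acting through $C_4=\langle\sigma\rangle$, $\chi(\sigma)=\frac14$ and $\tilde\chi(\sigma^j)=\frac j4$. Here $m'=4$, and the principal component $P$ has $m_P=4$, $P\cdot P=-1$. The diagonal term at $P$ is $-4j\equiv 0\pmod 4$ for every choice $g_{P\to P}=\sigma^j$, but its average is $-6\equiv 2$.

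Your cancellation step for the edges also fails. The relation $\sum_{w'}m_{w'}(v_0\cdot w')=0$ includes $w'=v_0$, so over the edges alone it gives $-m_v(v_0\cdot v_0)$, not $0$. In your $\sigma$-averaged sum, the term $\frac{n_v-1}{2n_v}$ from Lemma~\ref{charsum2} attaches to every neighbour of $v'=\sigma v_0$, not only to the diagonal. The differences $\tilde\chi(g_{w'\to w_0})-\tilde\chi(g_{v'\to v_0})$ do cancel, but then the edge part is exactly minus the averaged diagonal, so the averaged total is $0$.

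In the example the true value is $\psi(\chi)\equiv 2\pmod 4$. Directly from \eqref{starstar} with $g_{P\to P}=1$ one gets $6$, consistent with $\ker\psi=C_2$ inside $H^1(G,\im\alpha)=C_4$ in the table. Your spanning-tree fallback fails for the same reason: if the edge terms vanished modulo $m'$ for an honest choice, $\psi$ would be identically zero.

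The missing idea is that the whole answer lives in the edge terms and must be extracted there.

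1. The paper takes $g_{v_0\to v_0}=1$ and $\tilde\chi(1)=0$, so the diagonal vanishes outright.
2. It groups the edges at $v_0$ by edge orbits $e$ from $v$ to $w$. Absence of inversions identifies the edges of $e$ at $v_0$ with $H_v/H_e$, and their contribution becomes $\frac{m_vm_wr_v}{|H_e|}\sum_{h\in H_v}\tilde\chi(h\,g_{w'\to w_0})$.
3. Lemma~\ref{charsum2} is applied to this sum with $g=g_{w'\to w_0}$, an off-diagonal element. The factor $\frac{n_v-1}{2n_v}$ comes from the stabiliser of the source vertex permuting its edges.
4. The remaining $\tilde\chi(g)$-parts at the two ends of $e$ cancel as $\chi(\sigma)+\chi(\sigma^{-1})=0$. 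This is a sum, not a telescoping difference.
5. Only then is the fibre relation used, in the form $\sum_{c\sim v_0}m_c=-m_v(v_0\cdot v_0)$. This is exactly where the self-intersection factor in (1) comes from.
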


\begin{proof}
(1, 2)
We start from the second expression in \eqref{starstar} (Theorem \ref{kerpsi}):
$$
  \psi(\chi) \equiv
  \sum_{v\in\VG} m_v r_v \sum_{w'\in V} m_{w'}\,(v_0\cdot w')\,\tilde\chi(g_{w'\shortto w_0})
  \pmod{m'},
$$
for any lift $\tilde\chi : G \to \Q$.
Choosing $g_{v_0\shortto v_0}=1$ and $\tilde\chi(1)=0$, the diagonal term $w'=v_0$
in the inner sum vanishes. Since $v_0\cdot w'$ counts geometric edges
from $v_0$ to $w'$, the remaining inner sum reindexes as a sum over geometric edges
at $v_0$, each edge $(v_0,w')$ contributing $m_{w'}\,\tilde\chi(g_{w'\shortto w_0})$.

Now we group by edge orbits. Fix an orbit $e=\e vw\in\EG$.
Since $G$ is transitive on~$v$, there is a geometric edge $(v_0,w')\in e$,
though not necessarily with $w'=w_0$.
Since $G$ acts without inversion, the edges of $e$ incident to $v_0$
are in one-to-one correspondence with $[h]\in H_v/H_e$ via $[h]\mapsto(v_0,h^{-1}w')$.

The contribution of $e$ to the $v_0$ term of \eqref{starstar} is
$$
  m_v r_v \cdot m_w \sum_{[h]\in H_v/H_e} \tilde\chi(g_{h^{-1}w'\shortto w_0})\mod m'\Z.
$$
By \eqref{sigmaprod}, the values $m_w\chi(g)\in\Q/\Z$ are constant on left and right cosets of $H_w$,
and therefore so are $m_vr_vm_w\tilde\chi(g)$ mod $m'\Z$. As $H_e\subseteq H_w$,
we can replace $\sum_{[h]\in H_v/H_e}$ by $\frac{1}{|H_e|}\sum_{h\in H_v}$
and choose $g_{h^{-1}w'\shortto w_0}=h\cdot g_{w'\shortto w_0}$ in the above sum,
and it becomes
$$
  \frac{m_vm_wr_v}{|H_e|}\sum_{h\in H_v}\tilde\chi(h\cdot g_{w'\shortto w_0})\mod m'\Z.
$$
Applying Lemma~\ref{charsum2} with $g=g_{w'\shortto w_0}$, this equals
$$
  \frac{m_vm_wr_v}{|H_e|}
  \Bigl(
    \frac{|G|\,(n_v-1)}{2\,r_v\,n_v} + \frac{|G|}{r_v}\cdot\tilde\chi(g_{w'\shortto w_0})
  \Bigr)
  =
  \frac{m_vm_w|G|}{|H_e|}
  \Bigl(
    \frac{n_v-1}{2\,n_v} + \tilde\chi(g_{w'\shortto w_0})
  \Bigr)\mod m'\Z.
$$
In the same way, the contribution of $e$ to the $w_0$ term is
$$
  \frac{m_vm_w|G|}{|H_e|}
  \Bigl(
    \frac{n_w-1}{2\,n_w} + \tilde\chi(g_{v'\shortto v_0})
  \Bigr).
$$
The point now is that the two `$\tilde\chi$'s cancel.
Since $(v_0,w')$ and $(v',w_0)$ are both in $e$, there is $\sigma\in G$
with $\sigma(v_0)=v'$ and $\sigma(w')=w_0$. So $g_{w'\shortto w_0}$ and $\sigma$
differ by left-multiplication by an element of $H_w$, and similarly
$g_{v'\shortto v_0}$ and $\sigma^{-1}$ differ by left-multiplication by an element of $H_v$.
By \eqref{sigmaprod}, $\chi(g_{w'\shortto w_0})+\chi(g_{v'\shortto v_0})\equiv\chi(\sigma)+\chi(\sigma^{-1})=0$
modulo terms that vanish mod $m'\Z$ after multiplication by $m_vm_w|G|/|H_e|$.
Adding the $v$- and $w$-contributions, we get
$$
  \psi(\chi) \equiv
  \sum_{e=\e vw \in \EG}
  \frac{m_vm_w|G|}{|H_e|}
  \Bigl(
    \frac{n_v-1}{2\,n_v} + \frac{n_w-1}{2\,n_w}
  \Bigr)\mod m'\Z.
$$
Rewriting this back as a sum over vertices,
$$
  \psi(\chi) \>\equiv\>
  \sum_{v\in\VG} \frac{|G|\,m_v\,(n_v-1)}{2\,n_v}
  \sum_{e\ni v} \frac{m_{w(e)}}{|H_e|}
  \pmod{m'},
$$
where $w(e)$ denotes the orbit at the far end of $e$ from $v$.
The inner sum is evaluated using the fibre relation $\mathbf{m}\cdot v_0=0$:
$$
  \sum_{e\ni v}\frac{m_{w(e)}}{|H_e|}
  = \frac{1}{|H_v|}\sum_{e\ni v} m_{w(e)}\,[H_v:H_e]
  = \frac{1}{|H_v|}\sum_{c\sim v_0} m_c
  = \frac{-m_v\,(v_0\cdot v_0)}{|H_v|}
  = \frac{-m_v r_v\,(v_0\cdot v_0)}{|G|},
$$
where the second equality counts geometric edges at $v_0$ weighted by
far-endpoint multiplicity, and the third uses $\sum_c m_c\,(c\cdot v_0)=0$.
Substituting,
$$
  \psi(\chi) \equiv
  -\frac{1}{2}\sum_{v\in\VG}
  \frac{m_v^2\,r_v\,(v_0\cdot v_0)\,(n_v\!-\!1)}{n_v}
  \equiv
  -\frac{m'}{2}\sum_{v\in\VG}
  \frac{r_v m_v}{m'}\cdot\frac{m_v}{n_v}\cdot(v_0\cdot v_0)\cdot(n_v\!-\!1)
  \pmod{m'}.
$$
Finally, by Theorem~\ref{kerpsi}(2), as $\chi \in \im\partial_\psi=\ker f$ we have
$m_v/n_v\in\Z$, and the other factors in the sum are integers by definition.
Therefore the whole expression is 2-torsion in $\Z/m'\Z$, and we can drop
the overall sign.

(3) Recall that $m_v/n_v=[\chi]_v$ in the notation of Definition~\ref{defpsi},
and the condition that this is an integer for all $v\in \bar V$ is equivalent to $\chi\in\im\partial_\psi$.
Denote
$$
  A_v=r_v m_v/m', \qquad
  B_v=m_v/n_v=[\chi]_v, \qquad
  C_v=v_0\cdot v_0, \qquad
  D_v=n_v-1.
$$
By (1) and (2) of the theorem,
$$
  \psi(\chi)=0 \qquad \Longleftrightarrow \qquad \sum_{v\in\VG} A_v B_v C_v D_v\in 2\Z,
$$
so we just need to explain why
$$
  A_v,B_v,C_v,D_v\text{ are all odd}
  \quad\Longleftrightarrow\quad
  v\text{ is extremal and }B_v\text{ is odd.}
$$
Now, $C_v$ odd is exactly the first condition for extremality in \ref{defpsi}, and $A_v$ odd
is the third. Finally, when $B_v=m_v/n_v$ is odd,
the condition that $D_v=n_v-1$ is odd, equivalently $n_v$ is even, is equivalent to $m_v$ being even,
the second condition for extremality.
\end{proof}

Recall from \S\ref{shistory} that together with Theorem \ref{tarith}
this completes the proof of Theorem \ref{etnc}.

\section{Appendix A. Linear algebra}

\begin{theorem}
\label{pairings}
Let $W$ be a finite-dimensional $\Q$-vector space equipped with a
nondegenerate symmetric $\Q$-valued bilinear form $\langle\cdot,\cdot\rangle$.

\begin{itemize}
\item[(1)] (Determinant)
If $v_1,\dots,v_r$ is a $\Z$-basis of a lattice $L \subset W$,
write $Q$ for the Gram matrix of $\langle\cdot,\cdot\rangle$ with respect to this basis, so that
$$
  Q=\bigl(\langle v_i,v_j\rangle\bigr)_{i,j}, \qquad
  \Bigl\langle \sum_{i=1}^r x_i v_i , \sum_{i=1}^r y_i v_i \Bigr\rangle = x^T Q y.
$$
The determinant $\det Q$ is independent of the basis choice, and is denoted $\det(\lara|_L)$.%
\footnote{The determinant may be zero if $L$ is not of full rank.}

\item[(2)] (Pullback formula)
Let $Q$ be the Gram matrix of $\langle\cdot,\cdot\rangle$ in some basis of $W$.
Let $A:\Z^r \to W$ be an injective homomorphism, viewed as a matrix
with respect to the standard basis of $\Z^r$ and the chosen basis of $W$. Then
$$
  \det(\lara|_{A(\Z^r)}) = \det(A^T Q A).
$$

\item[(3)] (Index--determinant relation)
If $L' \subset L$ is a sublattice of finite index, then
$$
  \det(\lara|_{L'}) = [L:L']^2 \det(\lara|_L).
$$

\item[(4)] (Orthogonal decomposition formula)
Suppose there is an orthogonal decomposition over $\Q$
$$
  W = U \oplus U'.
$$
Let $L \subset W, L_U \subset U, L_{U'} \subset U'$ be lattices
such that $L_U \oplus L_{U'} \subseteq L$, and is of finite index. Then
$$
  \det(\lara|_L)\cdot[L:L_U\oplus L_{U'}]^2
  \>=\> \det(\lara|_{L_U})\cdot\det(\lara|_{L_{U'}}).
$$
\end{itemize}
\end{theorem}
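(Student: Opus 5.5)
The plan is to prove the four parts of Theorem~\ref{pairings} in order, each feeding the next, working throughout with Gram matrices and change-of-basis matrices over $\Q$.

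For (1), take two $\Z$-bases of $L$, related by $P\in GL_r(\Z)$. The Gram matrices are then related by $Q'=P^TQP$, so $\det Q'=(\det P)^2\det Q=\det Q$, because $\det P=\pm1$.

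For (2), note that the Gram matrix of the basis $Ae_1,\dots,Ae_r$ of $A(\Z^r)$ has entries $\langle Ae_i,Ae_j\rangle=e_i^TA^TQAe_j$. Injectivity of $A$ ensures these vectors really form a $\Z$-basis of the image, so (1) applies.

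For (3), use the elementary divisor theorem to choose bases of $L$ and $L'$ related by an integer matrix $P$ with $|\det P|=[L:L']$. Then the Gram matrix of $L'$ is $P^TQP$, and taking determinants gives $[L:L']^2\det(\lara|_L)$.

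For (4), the key observation is that the orthogonal decomposition makes the Gram matrix of $L_U\oplus L_{U'}$ block diagonal. Concretely, take the concatenation of a basis of $L_U$ and a basis of $L_{U'}$. This is a basis of $L_U\oplus L_{U'}$, and the sum is direct because $U\cap U'=0$. Its Gram matrix is block diagonal, since cross terms vanish by orthogonality. Hence
$$\det(\lara|_{L_U\oplus L_{U'}})=\det(\lara|_{L_U})\det(\lara|_{L_{U'}}).$$
Applying (3) to the finite-index sublattice $L_U\oplus L_{U'}\subseteq L$ then gives the stated identity.

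The only care needed is over ranks. In (4) the statement implicitly assumes $L$ has full rank in $W$, which is forced by finite index, so that index and determinant are compatible. In (1) the footnote case of lower rank needs no separate treatment, since the base-change argument never uses full rank.

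I expect no real obstacle here. The only step requiring a genuine input is (3), where the existence of compatible bases comes from the structure theorem for subgroups of free abelian groups.
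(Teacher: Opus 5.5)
Your proof is correct and follows the classical argument the paper relies on. The paper gives no proof of its own, only a reference to \cite[Ch.~1]{Cassels}, and the standard argument there uses your steps: congruence $Q\mapsto P^TQP$ under change of basis, elementary divisors for the index relation, and a block-diagonal Gram matrix for the orthogonal sum combined with (3).

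One small inaccuracy is in your closing remark. Finite index of $L_U\oplus L_{U'}$ in $L$ does not force $L$ to have full rank in $W$; take $L=L_U\oplus L_{U'}$ with $L_U$ of rank less than $\dim U$. This does not affect your proof, which only uses that $L$ and $L_U\oplus L_{U'}$ have the same rank.
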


\begin{proof}
This is classical, see. e.g. \cite[Ch. 1]{Cassels}.
\end{proof}

The following lemmas extend Lorenzini's \cite[Lemma 9.6.5]{BLR} to non-symmetric matrices:

\begin{lemma}\label{adjug}
Let $A\in M_n(\Z )$ have rank $n-1$.
Write $A^*$ for the classical adjugate (or adjoint) matrix of $A$,
that is $A^* = (\det A_{ji})$ where $A_{ji}$ is the $(j,i)$ cofactor, so that $A A^* = \det(A) I$.
If $\mathbf{l}^T A=0$, $A\mathbf{m}=0$ with
$\mathbf{l}, \mathbf{m}\in \Z ^n\setminus\{0\}$, then
$$
  A^* \>=\> \pm\,\frac{|(\coker A)_{\tors}|}{\gcd(\mathbf{m})\gcd(\mathbf{l})}\cdot \mathbf{m}\,\mathbf{l}^T
     \qquad\text{and}\qquad
  \gcd(A^*)  \>=\> |(\coker A)_{\tors}|.
$$
\end{lemma}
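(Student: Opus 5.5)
The plan is to prove the rank-one factorisation of $A^*$ first and then read off the gcd. Since $\operatorname{rank}A=n-1$, we have $AA^*=\det(A)I=0$ and $A^*A=0$. It is also classical that $A^*$ has rank exactly $1$: some $(n-1)$-minor is nonzero, so $A^*\neq 0$, and its columns lie in $\ker A$. The kernel $\ker A\otimes\Q$ is spanned by $\mathbf m$ and the left kernel by $\mathbf l$. So every column of $A^*$ is a rational multiple of $\mathbf m$, every row is a rational multiple of $\mathbf l^T$, and hence $A^*=c\,\mathbf m\mathbf l^T$ for some $c\in\Q^\times$. Write $\mathbf m=g\mathbf m_0$ and $\mathbf l=h\mathbf l_0$ with $\mathbf m_0,\mathbf l_0$ primitive, $g=\gcd(\mathbf m)$ and $h=\gcd(\mathbf l)$. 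Then $A^*=c'\mathbf m_0\mathbf l_0^T$ with $c'=cgh$. Because $A^*$ is integral and $\mathbf m_0,\mathbf l_0$ are primitive, we get $c'\in\Z$, and $\gcd(A^*)=|c'|$, since the gcd of the entries of $\mathbf m_0\mathbf l_0^T$ is $1$. So both claims reduce to showing $|c'|=|(\coker A)_{\tors}|$.

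For that, use Smith normal form: $A=PSQ$ with $P,Q\in GL_n(\Z)$ and $S=\diag(s_1,\dots,s_{n-1},0)$. Then $|(\coker A)_{\tors}|=s_1\cdots s_{n-1}$. The adjugate is multiplicative up to order, $(PSQ)^*=Q^*S^*P^*$, and $P^*=\pm P^{-1}$, $Q^*=\pm Q^{-1}$ are unimodular. Since multiplying by unimodular matrices does not change the gcd of the entries, $\gcd(A^*)=\gcd(S^*)$. Now $S^*$ has a single nonzero entry, in position $(n,n)$, equal to $s_1\cdots s_{n-1}$. This gives $\gcd(A^*)=|(\coker A)_{\tors}|$, and then $|c'|$ equals this, yielding the displayed formula with the sign ambiguity.

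The one step needing care is the identity $(PSQ)^*=Q^*S^*P^*$ for singular $S$. I would justify it by polynomial identity or density: it holds for invertible matrices, where $X^*=\det(X)X^{-1}$, and both sides are polynomial in the entries. This argument is routine, so I do not expect a genuine obstacle. The main point to state cleanly is that integrality of $A^*$ forces $c'\in\Z$; this follows because a primitive vector has entries with gcd $1$, so a rational multiple that is integral must have an integer scalar.
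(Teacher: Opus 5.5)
Your proof is correct and follows essentially the same route as the paper: Smith normal form, multiplicativity of the adjugate, and $S^*=s_1\cdots s_{n-1}\,e_ne_n^T$. The only difference is in the bookkeeping. You get the shape $A^*=c\,\mathbf m\mathbf l^T$ from $AA^*=A^*A=0$ and use the Smith form only to compute the gcd, via invariance of the content under unimodular multiplication. The paper instead reads the factorisation directly off the Smith form $A=UDV$, identifying the last column of $V^{-1}$ and the last row of $U^{-1}$ as primitive generators of the two kernels. Your explicit normalisation $\mathbf m=g\mathbf m_0$, $\mathbf l=h\mathbf l_0$ and your density justification of $(XY)^*=Y^*X^*$ for singular matrices add a little care that the paper leaves implicit.
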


\begin{proof}
Write $A$ in Smith normal form, so $A=UDV$ with
$D=\mathrm{diag}(d_1,\dots,d_{n-1},0)$ and $U,V\in \GL_n(\Z)$.  Using
$(XY)^*=Y^*\,X^*$ and
$U^*=\det(U)U^{-1}=\pm U^{-1}$ and similarly for $V$,
$$
  A^* \>=\> V^*\,D^*\,U^*.
$$
Since $D$ has exactly one zero diagonal entry, $D^*$ has exactly
one nonzero entry, namely $\prod_{i=1}^{n-1}d_i=|(\coker A)_{\tors}|$ in position $(n,n)$,
so $D^*=t(A)\,e_n e_n^T$.  Therefore
$$
  A^* \>=\> t(A)\cdot
  \bigl(V^*\,e_n\bigr)\bigl(e_n^T\,U^*\bigr)
  \>=\> \pm\,t(A)\cdot
  (V^{-1}e_n)(e_n^T U^{-1}).
$$
The vector $V^{-1}e_n$ is the last column of $V^{-1}$, which spans
$\ker(A)=\ker(DV)$ over $\Q $; since it is a column of the
unimodular matrix $V^{-1}$ it is primitive, hence equals $\pm\mathbf{m}$.
Likewise $e_n^T U^{-1}$ is the last row of $U^{-1}$, which spans the left
kernel of $A$ and is primitive, hence equals $\pm\mathbf{l}^T$.

The last claim follows from
$$
  \gcd_{i,j} \frac{m_i l_j}{\gcd(\mathbf{m})\,\gcd(\mathbf{l})} =
    \gcd_i \frac{m_i}{\gcd(\mathbf{m})}\>\gcd_j \frac{l_j}{\gcd(\mathbf{l})} = 1.
\rlap{\qquad\qquad\qquad\qquad\qedhere}
$$
\end{proof}

\begin{lemma}\label{DMfor}
Let $M\in M_n(\Z)$ have rank $n-1$, and $D=\diag(r_1,\dots,r_n)\in M_n(\Z)$
diagonal with $r_i>0$.
Let $\mathbf{l}=(l_1,\dots,l_n)$ be any non-zero vector such that $\mathbf{l}^T M=0$.
Write (the quantities $d_i, d$ below are local to this lemma, unrelated to the
Bosch--Liu $d_v$ of \S\ref{sscoh})
$$
  R=\prod_{i=1}^n r_i, \qquad
  d_i \>=\> \frac{R}{r_i}\cdot\dfrac{l_i}{\gcd(\mathbf{l})} \qquad
  d \>=\> R/\gcd_i(d_i).
$$
Then
\begin{itemize}
\item[(1)]
The matrix $DM$ has rank $n-1$.
\item[(2)]
There is a short exact sequence
$$
  0 \to \coker(M) \xrightarrow{\bar{D}} \coker(DM) \xrightarrow{\gamma} \bigoplus_{i=1}^n \Z/r_i\Z \to 0,
$$
where $\bar{D}$ is induced by $D$, and $\gamma$ is the natural projection
$\Z^n/D(\im M)\to\Z^n/D(\Z^n)$.
\item[(3)]
Modulo torsion, the map $\bar D$ in (2) is multiplication by $d$: $\Z\to\Z$.
\item[(4)]
There is an exact sequence
$$
  0 \to \coker(M)_{\tors} \xrightarrow{\bar{D}} \coker(DM)_{\tors}
    \xrightarrow{\gamma}
    \bigoplus_{i=1}^n \Z/r_i\Z
    \xrightarrow{\pi} \Z/d\Z \to 0,
$$
where $\pi$ sends $e_i\mapsto d_i/\gcd_j(d_j)$.
\end{itemize}
\end{lemma}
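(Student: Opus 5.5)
We work throughout with the Smith normal form of $M$ and elementary manipulations of lattices; no cohomology is involved.

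\emph{Part (1).} This is immediate: $D$ is invertible over $\Q$, so $\operatorname{rank}(DM)=\operatorname{rank}(M)=n-1$.

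\emph{Part (2).} We view $\coker(M)=\Z^n/\im M$ and $\coker(DM)=\Z^n/D(\im M)$. Multiplication by $D$ is injective on $\Z^n$, so it induces an isomorphism $\Z^n/\im M\cong D\Z^n/D(\im M)$. This is a subgroup of $\Z^n/D(\im M)$, and the quotient is $\Z^n/D\Z^n=\bigoplus_i\Z/r_i\Z$. This gives the short exact sequence, with $\bar D$ injective and $\gamma$ the natural projection, as stated.

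\emph{Part (3).} The free quotient of $\coker(M)$ is realised by $x\mapsto \mathbf{l}^Tx/\gcd(\mathbf{l})$. Since $\mathbf{l}^T M=0$ and $\mathbf{l}/\gcd(\mathbf{l})$ is primitive, this map is surjective onto $\Z$ with torsion kernel.

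For $DM$, the left kernel is spanned by $\mathbf{l}^T D^{-1}$. Clearing denominators gives the integer vector $\bigl(R\,l_i/(r_i\gcd(\mathbf{l}))\bigr)_i=(d_i)_i$. Its primitive version is $\mathbf{d}/\gcd(d_j)$, and $x\mapsto \mathbf{d}^Tx/\gcd_j(d_j)$ realises the free quotient of $\coker(DM)$.

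Composing with $D$ gives $x\mapsto \sum_i d_i r_i x_i/\gcd(d_j)$. Since $d_ir_i=R\,l_i/\gcd(\mathbf{l})$, this equals $\bigl(R/\gcd(d_j)\bigr)\cdot \mathbf{l}^Tx/\gcd(\mathbf{l})=d\cdot(\text{free coordinate on }\coker M)$. So modulo torsion, $\bar D$ is multiplication by $d$.

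\emph{Part (4).} We apply the snake lemma to the morphism of short exact sequences
\[
0\to\coker(\cdot)_{\tors}\to\coker(\cdot)\to\Z\to0,
\]
taken from the $M$ row to the $DM$ row, with vertical maps $\bar D$ on the tors and coker terms and $\times d$ on $\Z$. All vertical maps are injective, so the snake lemma gives an exact sequence
\[
0\to \coker(\bar D_{\tors})\to\coker(\bar D)\to \Z/d\Z\to0 .
\]
By (2), $\coker(\bar D)=\bigoplus\Z/r_i\Z$, and the first term is $\coker(DM)_{\tors}/\bar D(\coker(M)_{\tors})$. Splicing this with the injectivity of $\bar D$ on torsion gives the four-term sequence.

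Finally we identify $\pi$. The map $\coker(\bar D)\to\Z/d\Z$ is induced by the free coordinate of $\coker(DM)$ taken modulo $d$. So $e_i$, lifted to $e_i\in\Z^n$, goes to $d_i/\gcd_j(d_j)\bmod d$, which is the stated formula.

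\emph{Main obstacle.} The step most likely to hide an error is the sign and normalisation bookkeeping in (3) and in the identification of $\pi$. One must check that $\mathbf{d}/\gcd(d_j)$ is primitive and generates the left kernel of $DM$ over $\Z$; this holds because a primitive integer generator of a rank-one lattice is unique up to sign. One must also check that the free quotient of $\coker(DM)$ is indeed the image of that functional, i.e.\ that the functional is surjective onto $\Z$. Both follow from primitivity, but each should be verified.
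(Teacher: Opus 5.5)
Your proof is correct and follows the paper's argument: you use the same primitive functionals $\mathbf{l}/\gcd(\mathbf{l})$ and $\hat{\mathbf{l}}=\mathbf{d}/\gcd_j(d_j)$ to realise the free quotients, the same identity $\hat l_i r_i = d\,l_i/\gcd(\mathbf{l})$ for (3), and a snake-lemma chase for (4). The only difference is cosmetic. The paper applies the snake lemma to the diagram with rows $0\to\coker(M)\to\coker(DM)\to\bigoplus_i\Z/r_i\Z\to 0$ and $0\to\Z\xrightarrow{\times d}\Z\to\Z/d\Z\to 0$, with surjective vertical maps, and takes kernels. You use the transposed diagram, with rows the torsion--free sequences and injective vertical maps, and take cokernels. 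Both are the same $3\times 3$ diagram read the other way.
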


\begin{proof}
Replacing $\mathbf{l}$ by $\mathbf{l}/\gcd(\mathbf{l})$ if necessary,
we may assume $\mathbf{l}$ is primitive.

\textbf{(1)} Since $D$ is invertible over $\Q$, we get $\rk(DM)=\rk(M)=n-1$.

\textbf{(2)} Since $D$ is injective and $\im(DM)=D(\im M)\subset D\Z^n$, the sequence
$$
  0\to \Z^n/\im M\xrightarrow{[v]\,\mapsto\,[Dv]} \Z^n/\im(DM)\to \Z^n/D\Z^n\to 0
$$
is exact, and $\Z^n/D\Z^n\cong\bigoplus_i\Z/r_i\Z$.

\textbf{(3),(4)}
The maps $[v]\mapsto{\mathbf{l}}^T v$ and $[v]\mapsto\hat{\mathbf{l}}^T v$
are surjections $\coker(M)\to\Z$ and $\coker(DM)\to\Z$ with kernels equal to
the respective torsion subgroups, where $\hat{\mathbf{l}}$ is the primitive left
null vector of $DM$: the left kernel of $DM$ over $\Q$ is spanned by
$D^{-T}{\mathbf{l}}$, whose $i$-th component is ${l}_i/r_i$,
so $\hat{l}_i = d_i/\gcd_j(d_j)$.
Indeed,
$$
  \hat{l}_i r_i \>=\> \frac{d_i}{g}\cdot r_i \>=\> \frac{R}{r_i}\cdot{l}_i\cdot\frac{r_i}{g}
  \>=\> \frac{R}{g}\,{l}_i \>=\> d\,{l}_i, \qquad g=\gcd_i d_i.
$$
So $\hat{\mathbf{l}}^T D = d\,{\mathbf{l}}^T$, and $\bar{D}$ acts
as multiplication by $d$ on the free summands, proving~(3).
For~(4), define $\pi\colon\bigoplus_i\Z/r_i\Z\to\Z/d\Z$ by
$\pi(\mathbf{a})=\hat{\mathbf{l}}^T\mathbf{a}\bmod d$; this is well-defined
(if $a_i'-a_i=r_i w_i$ then
$\hat{\mathbf{l}}^T(\mathbf{a}'-\mathbf{a})=d\,{\mathbf{l}}^T\mathbf{w}\equiv 0$)
and surjective (since $\gcd_i\hat{l}_i=1$).
The diagram
$$
  \begin{array}{ccccccccc}
    0 & \to & \coker(M) & \xrightarrow{\bar{D}} & \coker(DM) & \xrightarrow{\gamma}
      & \bigoplus_i\Z/r_i\Z & \to & 0 \\[2pt]
      && \downarrow\rlap{$\scriptstyle{\,{\mathbf{l}}^T}$}
      && \downarrow\rlap{$\scriptstyle{\,\hat{\mathbf{l}}^T}$}
      && \downarrow\rlap{$\scriptstyle{\,\pi}$} \\
    0 & \to & \Z & \xrightarrow{\>\times d\>} & \Z & \to & \Z/d\Z & \to & 0
  \end{array}
$$
commutes with exact rows and surjective vertical maps, and the kernels give (4).
\end{proof}

\section{Appendix B. Tamagawa numbers of weighted multigraphs}
\label{stamgraph}

We formulate basic properties of Tamagawa numbers of weighted graphs, taken from
electrical network theory. This is similar to \cite{CR}, except with vertex weights in addition to edge weights.
One observation, perhaps of independent interest, is that the
standard reduction laws (tail removal, serial, parallel, Y-$\Delta$) all follow formally from
the star-mesh transform \cite{Kennelly}.

A \textbf{weighted graph} is a finite connected multigraph
$$
  \cT = \bigl(V(\cT),E(\cT),(m_v)_v,(s_e)_e\bigr),
$$
in which vertices $v$ carry \textbf{multiplicity} $m_v\in\R_{>0}$ and
edges $e$ carry \textbf{length} $s_e\in\R_{>0}$.
It is naturally a metric space, every edge $e$ being isometric to
$[0,s_e]\subset{\mathbb R}$.
Loops are allowed, and contribute $2$ to the degree of their vertex.
Define the length pairing $\langle e,e'\rangle=\delta_{ee'}s_e$,
and the \textbf{Tamagawa number}
\begin{equation}\label{eqtDwt}
  t(\cT) = \prod_{v\in V} m_v^{\deg v - 2} \>\cdot\>
  \det\bigl(\langle\cdot,\cdot\rangle\big|_{H_1(\cT,\Z)}\bigr).
\end{equation}

\begin{theorem}
\label{tamwt}
The Tamagawa number $t$ of weighted graphs satisfies the following:
\begin{itemize}

\item[(a)] (Single vertex)
If $\cT$ has a single vertex $v$ and no loops, then
$$
  t(\cT) \>=\> m_v^{-2}.
$$

\item[(b)] (Loops)
If $\cT'$ is obtained from $\cT$ by adding an extra loop $e$ at a vertex $v$, then
$$
  t(\cT') \>=\> s_e\, m_v^2\, t(\cT).
$$

\item[(c)] (Star-mesh transform.)
Let $w$ be a vertex of $\cT$ of degree $n\ge 1$
with incident edges $e_1,\ldots,e_n$ of lengths $s_1,\ldots,s_n$ going to $n$
distinct neighbours $v_1,\ldots,v_n$. Set $\kappa=\sum_{k=1}^n s_k^{-1}$.
Let $\cT'$ be obtained from $\cT$ by removing $w$ and $e_1,\ldots,e_n$ and
adding an edge of length $s_is_j\kappa$ between each pair $v_i,v_j$. Then
$$
  t(\cT') \>=\> m_0^{-(n-2)}\cdot\prod_{k=1}^n (m_{v_k} s_k)^{n-2}
  \cdot \kappa^{\binom{n}{2}-1}\cdot t(\cT).
$$

\item[(d)]
The Tamagawa number is the unique function on weighted graphs satisfying
\textup{(a)}-\textup{(c)}.
\end{itemize}
\end{theorem}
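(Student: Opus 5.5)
The plan is to check (a) and (b) directly from the definition \eqref{eqtDwt}, to prove (c) by comparing the $H_1$-determinant with a weighted reduced Laplacian, and to deduce (d) by induction on the size of the graph.

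\emph{Parts (a) and (b).} For (a), a single vertex with no loops has $\deg v=0$ and $H_1(\cT,\Z)=0$. The determinant on the zero lattice is $1$, so $t(\cT)=m_v^{-2}$.

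For (b), adding a loop $e$ at $v$ gives $H_1(\cT',\Z)=H_1(\cT,\Z)\oplus\Z e$. This decomposition is orthogonal for $\lara$, since $e$ is orthogonal to every other edge and the old cycles do not involve $e$. So the determinant gets multiplied by $s_e$. Also $\deg v$ increases by $2$, which contributes the factor $m_v^2$.

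\emph{Part (c).} The vertex factors are bookkeeping. The factor $m_w^{\deg w-2}=m_w^{n-2}$ disappears, which gives $m_0^{-(n-2)}$ with $m_0=m_w$. Each $v_k$ loses the edge $e_k$ and gains $n-1$ mesh edges, so its degree rises by $n-2$, giving $\prod_k m_{v_k}^{n-2}$.

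For the determinant I would first prove a Kirchhoff-type identity for any weighted graph:
\[
  \det\bigl(\lara|_{H_1(\cT,\Z)}\bigr)=\Bigl(\prod_{e}s_e\Bigr)\cdot \det L^{\mathrm{red}}_{\cT}.
\]
Here $L_\cT$ is the Laplacian with conductances $c_e=1/s_e$, and $L^{\mathrm{red}}_{\cT}$ is $L_\cT$ with the row and column of one fixed vertex deleted. To prove it, apply Theorem~\ref{pairings}(4) to the orthogonal decomposition of $\Z^E$ into $H_1$ and the cut lattice spanned by $\lambda_0(v)=\sum_{e\ni v}\pm c_e e$, i.e.\ the unweighted analogue of $\lambda$ from Theorem~\ref{flowcut}(1), with orthogonality as there. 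The Gram matrix of the cuts is the Laplacian, up to the scalings by $c_e$. The index bookkeeping can be done over $\R$, or avoided by noting that both sides are polynomial identities in the $c_e$ that agree on integer values, where Theorem~\ref{TNC} with all $m_v=1$ applies after rescaling.

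The star-mesh transform is exactly the Schur complement of $L_\cT$ at $w$. The new conductances $c_ic_j/\kappa$ are precisely $1/(s_is_j\kappa)$. Grounding at a vertex $\ne w$ (possible since $|V|\ge 2$, as $w$ has a neighbour) gives
\[
  \det L^{\mathrm{red}}_{\cT}=\kappa\cdot\det L^{\mathrm{red}}_{\cT'}.
\]
The edge-length products satisfy
\[
  \frac{\prod_{e\in\cT'}s_e}{\prod_{e\in\cT}s_e}=\frac{\prod_{i<j}s_is_j\kappa}{\prod_k s_k}=\kappa^{\binom n2}\prod_k s_k^{n-2}.
\]
Combining the two gives the factor $\kappa^{\binom n2-1}\prod_k s_k^{n-2}$, as claimed. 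The case $n=1$, tail removal, is included: $\cT'$ may then be a single vertex, and the formula is consistent with (a).

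\emph{Part (d).} Let $t_1,t_2$ both satisfy (a)--(c). I would induct on the number of vertices, and then on the number of edges. By (b), loops can be stripped, so assume $\cT$ has none. If $|V|=1$, (a) applies. Otherwise pick a vertex $w$. If its neighbours are distinct, apply (c) to $w$: the prefactor is nonzero, and $\cT'$ has fewer vertices, so $t_1(\cT)=t_2(\cT)$.

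The main obstacle is a $w$ with parallel edges, since (c) requires distinct neighbours. There is no vertex of $\cT$ at which (c) can be applied if every vertex has parallel edges. The tool I would try first is (c) with $n=2$ read backwards. Subdividing an edge of length $s_1+s_2$ by a new degree-$2$ vertex $u$ expresses $t(\cT)$ as a nonzero multiple of the value on the subdivided graph. So one may subdivide all but one edge in each parallel bundle at $w$, making the neighbours of $w$ distinct. Eliminating $w$ then removes one vertex but keeps the subdivision vertices, so the vertex count is not monotone.

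To get a terminating induction I would aim to derive the parallel law $t(\cT)=(s+s')^{-1}ss'$-type scaling for merging two parallel edges into one of length $ss'/(s+s')$. The idea is to subdivide both edges and eliminate the two middle vertices in a suitable order via (c) and (b), a Y--$\Delta$/$\Delta$--Y manipulation. Once the parallel law is available as a consequence of (a)--(c), the induction on $(|V|,|E|)$ goes through: merge parallel bundles (fewer edges), then apply (c) (fewer vertices). If that derivation fails, the fallback is to choose a different measure for which the subdivide-then-eliminate step decreases, or to restrict the induction to the class of graphs actually arising.
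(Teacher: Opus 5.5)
Your parts (a)--(c) are correct and follow the paper's route. You use the identity $\det(\lara|_{H_1(\cT,\Z)})=\prod_e s_e\cdot\det L^{\mathrm{red}}_{\cT}$, which the paper simply quotes from the matrix-tree theorems in \cite{ABKS}, together with the observation that star-mesh is the Schur complement of the Laplacian at $w$. One small caveat concerns your self-contained derivation of that identity. For real lengths the weighted cut vectors $\sum_{e\ni v}\pm c_e e$ are not in $\Z^E$, so Theorem~\ref{pairings}(4) does not apply as stated. Specialising to integer conductances would also need parallel bundles, which is itself a parallel-law computation. It is cleaner to specialise to integer lengths and subdivide each edge into $s_e$ unit edges. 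This leaves $H_1$ isometric and multiplies $\det L^{\mathrm{red}}$ by $\prod_e s_e$, by the same Schur complement step. Then apply Theorem~\ref{TNC} with all $m_v=1$, and finish by homogeneity and continuity.

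The genuine gap is in (d). Uniqueness needs that \emph{every} function satisfying (a)--(c) also obeys the parallel law, and you leave this derivation open. Moreover, the mechanism you sketch cannot give it. Take parallel edges $e,e'$ of lengths $s,s'$ between $v\ne w$, and subdivide both at $x$ and $y$. Each of $x,y$ then has degree $2$ with neighbours $v,w$, so (c) at either one merely undoes its subdivision, in whatever order you eliminate them.

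What is missing is to use (c) with $n=3$ \emph{backwards}, which inserts a new vertex.
\begin{itemize}
\item[(i)] Subdivide only $e'$, at a new vertex $u$ of multiplicity $1$, into lengths $a+b=s'$. Now $e$, $vu$, $uw$ form a triangle with lengths $s,a,b$.
\item[(ii)] Let $\cT_Y$ be $\cT$ with $e,e'$ deleted and a new centre $v_0$ of multiplicity $1$ joined to $v,u,w$ by edges of lengths $s_1=\frac{as}{s+s'}$, $s_2=\frac{ab}{s+s'}$, $s_3=\frac{sb}{s+s'}$.
\item[(iii)] Then $\kappa=\frac{(s+s')^2}{abs}$, with $s_1s_2\kappa=a$, $s_1s_3\kappa=s$ and $s_2s_3\kappa=b$. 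So (c) at $v_0$ gives $t(\text{subdivided graph})=m_vm_w\,s_1s_2s_3\kappa^2\,t(\cT_Y)=m_vm_w(s+s')\,t(\cT_Y)$.
\item[(iv)] Applying (c) at $u$ with $n=2$ gives $t(\cT)=t(\text{subdivided graph})$.
\item[(v)] On the other side, $u$ is a leaf of $\cT_Y$. Removing it by (c) with $n=1$ costs the factor $m_{v_0}/m_u=1$.
\item[(vi)] After that, $v_0$ has degree $2$ with neighbours $v,w$, and (c) with $n=2$ yields a single edge of length $s_1+s_3=\frac{ss'}{s+s'}$. Hence $t(\cT_Y)=t(\cT')$.
\end{itemize}
Therefore $t(\cT)=m_vm_w(s+s')\,t(\cT')$ for any $t$ satisfying (c). This is exactly how the paper obtains the parallel law, in the proof of Corollary~\ref{tamlaws}(c). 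With it, your lexicographic induction on $(|V|,|E|)$ terminates as you intended: merge parallel bundles, strip loops, then eliminate a vertex.
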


\begin{proof}
\textit{(a)} Since $\cT$ has no edges, $H_1=0$ and $t(\cT) = m_v^{0-2}$.

\textit{(b)} The loop $e$ gives an orthogonal decomposition
$$
  H_1(\cT,\Z) \>=\> \Z e \oplus H_1(\cT',\Z).
$$
Taking determinants of $\lara$ and comparing $\deg v$ in $\cT$ and $\cT'$ we get
$t(\cT) = s_e\,m_v^2\,t(\cT')$.

\noindent
\textit{(c)}
\textit{Vertex factors.}
Write $c_k=s_k^{-1}$ for the `edge conductances'.
Removing $w$ of degree $n$ removes the factor $m_w^{n-2}$ from
$\prod_v m_v^{\deg(v)-2}$. In $\cT'$, each neighbour $v_k$ loses one
edge to $w$ and gains $n-1$ mesh edges, so
$\deg_{\cT'}(v_k) = \deg_\cT(v_k) + (n-2)$, contributing an extra
factor $m_{v_k}^{n-2}$. All other vertex degrees are unchanged, so
\begin{equation}\label{eq:starmesh-vert}
  \frac{\prod_v m_v^{\deg_{\cT'}(v)-2}}{\prod_v m_v^{\deg_\cT(v)-2}}
  \>=\> \frac{\prod_{k=1}^n m_{v_k}^{n-2}}{m_w^{n-2}}.
\end{equation}

\textit{Homology determinant.}
It remains to compute
$\det\bigl(\langle\cdot,\cdot\rangle\big|_{H_1(\cT,\Z)}\bigr)/
\det\bigl(\langle\cdot,\cdot\rangle\big|_{H_1(\cT',\Z)}\bigr)$.

Write $c_e = s_e^{-1}$ for the `edge conductances' and let
$\tau(\cT) = \sum_T \prod_{e\in T} c_e$ be the conductance-weighted
spanning-tree count, the sum being over all spanning trees $T$ of $\cT$. The
matrix-tree theorem for the homology pairing \textup{\cite[Cor.~1.6]{ABKS}} gives
$$
  \det\bigl(\langle\cdot,\cdot\rangle\big|_{H_1(\cT,\Z)}\bigr)
  \>=\> \sum_{T} \prod_{e\notin T} s_e
  \>=\> \prod_{e\in E(\cT)}s_e \cdot \tau(\cT),
$$
the second equality from
$\prod_{e\notin T} s_e = \bigl(\prod_e s_e\bigr)\prod_{e\in T}c_e$; similarly for $\cT'$.

The edges of $\cT'$ are those of $E(\cT)\setminus\{e_1,\ldots,e_n\}$ together
with the $\binom{n}{2}$ mesh edges of lengths $s_is_j\kappa$; each $s_k$
appears in $n-1$ of them, so
\begin{equation}\label{dethomT}
  \frac{\det\bigl(\langle\cdot,\cdot\rangle\big|_{H_1(\cT',\Z)}\bigr)}
       {\det\bigl(\langle\cdot,\cdot\rangle\big|_{H_1(\cT,\Z)}\bigr)}
  \>=\>
  \frac{\prod_{e\in E(\cT')} s_e}{\prod_{e\in E(\cT)} s_e}
    \cdot    \frac{\tau(\cT')}{\tau(\cT)}
  \>=\> \prod_{k=1}^n s_k^{n-2}\cdot \kappa^{\binom{n}{2}}
    \cdot    \frac{\tau(\cT')}{\tau(\cT)}
\end{equation}
To conclude (c) from
\eqref{eq:starmesh-vert} and \eqref{dethomT},
it remains to show that $\tau(\cT) = \kappa\,\tau(\cT')$.

The weighted Laplacian of $\cT$ is the $V(\cT)\times V(\cT)$ matrix $L(\cT)$ with
$$
  L(\cT)_{vw} \>=\>
  \begin{cases}
    \sum_{e\ni v} c_e & \text{if } v=w, \\
    -\sum_{e\colon v-w} c_e & \text{if } v\ne w,
  \end{cases}
$$
the sums running over the edges incident to $v$, respectively joining $v$ and $w$.
For a vertex $v_\ell$ let the reduced Laplacian $L(\cT)^{(v_\ell)}$ be the principal
submatrix of $L(\cT)$ on $V(\cT)\setminus\{v_\ell\}$ (i.e. removing one row and column).
By the matrix-tree theorem \textup{\cite[Cor.~5.6]{ABKS}},
\begin{equation}\label{eq:starmesh-kirchhoff}
  \tau(\cT) \>=\> \det L(\cT)^{(v_\ell)}
\end{equation}
for any choice of $v_\ell\in V(\cT)$. Pick such a $v_\ell\ne w$,
so that $\tau(\cT) = \kappa\,\tau(\cT')$ is equivalent to
\begin{equation}\label{eq:starmesh-redlap}
  \det L(\cT)^{(v_\ell)} \>=\> \kappa\cdot\det L(\cT')^{(v_\ell)}.
\end{equation}
Order the vertices as
$w, v_1, \ldots, v_n, \ldots$. Since $w$ has degree $n$
with incident edges $e_1,\ldots,e_n$, its diagonal entry is
$L(\cT)_{ww} = \sum_{v_k} c_k = \kappa$, and $L(\cT)$ has block form
$$
  L(\cT) \>=\>
  \begin{pmatrix} \kappa & -\mathbf{c}^T \\ -\mathbf{c} & L_{**} \end{pmatrix},
  \qquad \mathbf{c} = (c_1,\ldots,c_n,0,\ldots,0)^T,
$$
with $L_{**}$ the principal submatrix on $V(\cT)\setminus\{w\}$.
Then
\begin{equation}\label{eq:starmesh-schur}
  L_{**} - \mathbf{c}\,\mathbf{c}^T/\kappa \>=\> L(\cT'),
\end{equation}
in other words the Schur complement of the $w$-block is exactly $L(\cT')$.
Indeed, the new mesh edge between $v_i$ and $v_j$ has conductance
$1/(s_is_j\kappa) = c_ic_j/\kappa$, contributing $-c_ic_j/\kappa$ to the
$(v_i,v_j)$-entry, which matches $-\mathbf{c}\,\mathbf{c}^T/\kappa$ off the
diagonal; on the diagonal,
$$
  L(\cT')_{v_iv_i} - (L_{**})_{v_iv_i}
  \>=\> \underbrace{\sum_{j\ne i} c_ic_j/\kappa}_{\text{new mesh edges}}
        \>-\> \underbrace{c_i}_{\text{deleted }e_i}
  \>=\> \frac{c_i(\kappa-c_i)}{\kappa} - c_i
  \>=\> -\frac{c_i^2}{\kappa}.
$$
Now delete the $v_\ell$-row and $v_\ell$-column from~\eqref{eq:starmesh-schur}.
Deleting a non-$w$ index commutes with forming the Schur
complement of the $w$-block: the $w$-block of $L(\cT)^{(v_\ell)}$ is still
the scalar $\kappa$, and the Schur complement of that block is
$L(\cT')^{(v_\ell)}$.
Applying the Schur determinant identity
$\det\smallmatrix{a}{-B^T}{-B}{D} = a\cdot\det(D-B a^{-1}B^T)$
to the $w$-block yields~\eqref{eq:starmesh-redlap}.

\textit{(d)} Any connected weighted graph reduces to a single vertex with no loops by finitely many
applications of star-mesh (c), loop removal (b) and merging parallel edges. The latter is a special case of
(c) as the proof of \ref{tamlaws}(b,c) shows below, so $t$ is unique.
\end{proof}

\begin{corollary}[Tamagawa number laws for weighted graphs]
\label{tamlaws}
Standard transformations of a weighted graph $\cT$ into $\cT'$
change the Tamagawa number as follows:
\smallskip
\begin{center}
\renewcommand{\arraystretch}{1.4}
\begin{tabular}{l@{\qquad}c@{\qquad}c@{\qquad}c}
Law & $\cT$ & $\cT'$ & $t(\cT)/t(\cT')$ \\
\hline
(a) Leaf removal &
  \showT{\vertT{Tv0}{w}{0,0} \vertT{Tv1}{v}{2.6,0} \draw (Tv0) -- node[above, midway, elbl] {$s$} (Tv1); \Rstubs{Tv1}} &
  \showT{\vertT{Tv1}{v}{0,0} \Rstubs{Tv1}} &
  $m_v/m_w$ \cr
(b) Serial law &
  \showT{\vertT{Tu}{u}{0,0}\vertT{Tv}{w}{1.8,0}\vertT{Tw}{v}{3.6,0}\draw (Tu) -- node[above, midway, elbl] {$s$} (Tv);  \draw (Tv) -- node[above, midway, elbl] {$s'$} (Tw); \Lstubs{Tu}  \Rstubs{Tw}} &
  \showT{\vertT{Tu}{u}{0,0}\vertT{Tw}{v}{2.6,0}\draw (Tu) -- node[above, midway, elbl] {$s+s'$} (Tw); \Lstubs{Tu} \Rstubs{Tw}} &
  $1$ \cr
(c) Parallel law &
  \showT{\vertT{Tv}{u}{0,0}  \vertT{Tw}{v}{2.6,0}  \draw (Tv) to[bend left=22] node[above, midway, elbl] {$s$} (Tw);  \draw (Tv) to[bend right=22] node[above, midway, elbl] {$s'$} (Tw);  \Lstubs{Tv}  \Rstubs{Tw} } &
  \showT{\vertT{Tv}{u}{0,0} \vertT{Tw}{v}{2.6,0} \draw (Tv) to node[above, midway, elbl, scale=1.2] {$\frac{ss'}{s+s'}$} (Tw); \Lstubs{Tv} \Rstubs{Tw}} &
  $m_um_v(s\!+\!s')$  \cr
(d) \vtop{\hbox{$n$-parallel law}\vspace{2pt}\hbox{($n$ edges $u$--$v$)}} &
  \showT{\vertT{Tv}{u}{0,0}\vertT{Tw}{v}{2.6,0}\draw (Tv) to[bend left=40] node[above, midway, elbl] {$s$} (Tw);\draw (Tv) to[bend left=10] node[above, midway, elbl, inner sep=1.4pt] {$s$} (Tw);\node at (1.3,-0.22) {$\cdots$};\draw (Tv) to[bend right=40] node[below, midway, elbl] {$s$} (Tw);\Lstubs{Tv}\Rstubs{Tw}} &
  \showT{\vertT{Tv}{u}{0,0}\vertT{Tw}{v}{2.6,0}\draw (Tv) to node[above, midway, elbl, scale=1.3] {$\frac sn$} (Tw); \Lstubs{Tv} \Rstubs{Tw}} &
  $n(sm_um_v)^{n-1}$ \cr
(e) \vtop{\hbox{Y-$\Delta$ transform}\vspace{2pt}\hbox{\rm ($\kappa=\frac{1}{s_1}\!+\!\frac{1}{s_2}\!+\!\frac{1}{s_3}$)}} &
  \showTsh[10]{\vertTl{Tv1}{v_1}{0,0}{below} \vertTl{Tv2}{v_2}{1.8,1.1}{left} \vertTl{Tv3}{v_3}{3.6,0}{below} \vertTl{Tv0}{w}{1.8,0}{below} \draw (Tv1) -- node[below, midway, elbl] {$s_1$} (Tv0); \draw (Tv0) -- node[right, midway, elbl] {$s_2$} (Tv2); \draw (Tv0) -- node[below, midway, elbl] {$s_3$} (Tv3); \Lstubs{Tv1} \Ustubs{Tv2} \Rstubs{Tv3}} &
  \showTsh[10]{\vertTl{Tv1}{v_1}{0,0}{below} \vertTl{Tv2}{v_2}{1.3,1.1}{above right = -3pt and 2pt} \vertTl{Tv3}{v_3}{2.6,0}{below} \draw (Tv1) -- node[above left, pos=0.4, elbl, inner sep=1pt] {$s_1s_2\kappa$} (Tv2); \draw (Tv1) -- node[below, midway, elbl, inner sep=3pt] {$s_1s_3\kappa$} (Tv3); \draw (Tv2) -- node[above right, pos=0.65, elbl, inner sep=1pt] {$s_2s_3\kappa$} (Tv3); \Lstubs{Tv1} \Ustubs{Tv2} \Rstubs{Tv3}} &
  $\dfrac{m_w}{m_{v_1}m_{v_2}m_{v_3}\,s_1s_2s_3\,\kappa^2}$ \cr
(f) Vertex scaling &
  \multicolumn{2}{c}{replace each $m_v$ by $\lambda_v m_v$} &
  $\prod_v\lambda_v^{2-\deg v}$ \cr
(g) Edge scaling &
  \multicolumn{2}{c}{replace each $s_e$ by $\lambda\, s_e$} &
  $\lambda^{-\dim H_1(\cT,\Q)}$ \cr
\end{tabular}
\end{center}
In every case $w$ has exactly the edges shown, and it is removed in (a), (b) and (e); all other
vertices and all edges between them are unchanged.
\end{corollary}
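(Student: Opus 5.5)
We derive every row of the table from Theorem~\ref{tamwt} (b),(c) together with the definition \eqref{eqtDwt}. In each case we compare the vertex factor $\prod_v m_v^{\deg v-2}$ with the homology determinant separately.

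\textbf{Scaling laws (f),(g).} These come straight from the definition. Replacing $m_v$ by $\lambda_v m_v$ changes only the vertex factor, and it changes it by $\prod_v\lambda_v^{\deg v-2}$, so $t(\cT)/t(\cT')=\prod_v\lambda_v^{2-\deg v}$. Replacing each $s_e$ by $\lambda s_e$ multiplies the Gram matrix on $H_1(\cT,\Z)$ by $\lambda$. Its determinant therefore picks up $\lambda^{\dim H_1}$, which gives the ratio $\lambda^{-\dim H_1(\cT,\Q)}$.

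\textbf{Star-mesh specialisations (a),(b),(e).} Each of these is Theorem~\ref{tamwt}(c) at a vertex $w$ of degree $n=1,2,3$. We read the statement's $m_0$ as $m_w$.
\begin{itemize}
\item For $n=1$ no mesh edges are added, and the factor is $m_w\,(m_v s)^{-1}\,\kappa^{-1}$ with $\kappa=1/s$. This equals $m_w/m_v$, so $t(\cT)/t(\cT')=m_v/m_w$.
\item For $n=2$ the exponents $n-2$ vanish and $\binom22-1=0$. The new edge has length $s s'(1/s+1/s')=s+s'$ and the ratio is $1$, which is the serial law.
\item For $n=3$ we get $t(\cT')=m_w^{-1}\prod_k m_{v_k}s_k\,\kappa^2\,t(\cT)$, which is (e) inverted.
\end{itemize}

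\textbf{Parallel laws (c),(d).} The one subtlety is that star-mesh requires distinct neighbours, while here two parallel edges join $u$ and $v$. Subdivide one of the two edges, say the one of length $s'$, by inserting a degree-2 vertex $w$ with arbitrary multiplicity and edges of lengths $s'/2,s'/2$; by (b) this does not change $t$. Now apply star-mesh at $u$ is not convenient, so instead apply (c) at $w$ after subdividing the other edge too, so that the graph contains a cycle $u$–$w_1$–$v$–$w_2$–$u$.

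A cleaner route is to mimic the proof of (c) directly. Parallel edges contribute conductances $1/s+1/s'$ to the Laplacian, so $\tau$ is unchanged when they are merged into one edge of length $ss'/(s+s')$. By the matrix-tree identity used in the proof of (c), the homology determinant equals $\prod_e s_e\,\tau$. Hence the determinant ratio is
\[
\frac{ss'}{ss'/(s+s')}=s+s'.
\]
Meanwhile $\deg u$ and $\deg v$ each drop by one, which contributes $m_um_v$. This gives (c).

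For (d), merge the $n$ equal parallel edges one at a time. Merging $k$ edges of length $s/k$ with one more edge of length $s$ gives length $s/(k+1)$ and factor $m_um_v(s/k+s)=m_um_v s(k+1)/k$. The product over $k=1,\dots,n-1$ telescopes to $n(sm_um_v)^{n-1}$.

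\textbf{Main obstacle.} The only step needing care is the parallel law, because Theorem~\ref{tamwt}(c) does not directly cover repeated neighbours. The plan is to handle it by reusing the conductance-weighted matrix-tree formula from the proof of~(c), rather than reducing it formally to star-mesh.
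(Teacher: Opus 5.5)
Your proposal is correct. For (a), (b), (e) you specialise the star-mesh transform Theorem~\ref{tamwt}(c) to $n=1,2,3$; for (f), (g) you read the scaling off the definition; and for (d) you iterate the parallel law with the telescoping product. All of this is exactly what the paper does.

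The genuine difference is the parallel law (c). The paper derives it formally from star-mesh. It removes both parallel edges and inserts a trivalent vertex and an auxiliary leaf, both of multiplicity $1$, with edge lengths $as/(s+s')$, $ab/(s+s')$, $sb/(s+s')$, where $a+b=s'$. Star-mesh at the trivalent vertex returns $\cT$ with the edge of length $s'$ subdivided, with factor $m_um_v(s+s')$ in the statement's notation. Leaf removal followed by the serial law turns the same auxiliary graph into $\cT'$.

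You instead go back to the matrix-tree theorem. Merging parallel edges adds their conductances, so the weighted Laplacian, and hence $\tau$, is unchanged. The ratio then comes from $\prod_e s_e$ and the two degree drops. This is shorter and more transparent: it is just ``conductances add in parallel''. The cost is that it uses the matrix-tree theorem as an extra input rather than deducing the parallel law from star-mesh. The paper needs the formal deduction, because the uniqueness claim Theorem~\ref{tamwt}(d) rests on parallel merging being a consequence of (a)--(c). Your route proves the parallel law only for $t$ itself, not for an arbitrary function satisfying (a)--(c).

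Finally, delete your abandoned first attempt at (c). Subdividing an edge and applying star-mesh at the new degree-$2$ vertex only undoes the subdivision, so that line goes nowhere. What makes a pure star-mesh proof work is to apply star-mesh at a trivalent vertex carrying an auxiliary leaf, as the paper does.
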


\begin{proof}
\textit{(a)} This is a special case of \ref{tamwt}\,(c) for $n=1$: here $\kappa=1/s$ and
$$
  t(\cT')=m_0\cdot(m_1s)^{-1}\cdot s\cdot t(\cT)=(m_0/m_1)\,t(\cT).
$$
\textit{(b)} Apply \ref{tamwt}\,(c) with $n=2$;
all three exponents $-(n-2)$, $n-2$, $\binom{n}{2}-1$ vanish.

\noindent
\textit{(c)}  Choose $a, b > 0$ with $a+b=s'$, and set
$$
  s_1 = \frac{as}{s+s'}, \qquad s_2 = \frac{ab}{s+s'}, \qquad s_3 = \frac{sb}{s+s'},
  \qquad \kappa = \frac{1}{s_1}+\frac{1}{s_2}+\frac{1}{s_3} = \frac{(s+s')^2}{abs}.
$$
These satisfy
$$
  s_1s_2\kappa = a, \quad s_1s_3\kappa = s, \quad s_2s_3\kappa = b, \quad
  s_1s_2s_3\kappa^2 = s+s', \quad s_1+s_3 = \frac{ss'}{s+s'}.
$$
Let $\cT_Y$ be obtained from $\cT$ by removing $e, e'$, adding new vertices
$v_0, u$ of multiplicity $1$, and edges $v_0v, v_0u, v_0w$ of lengths
$s_1, s_2, s_3$:
$$
\showT[1.0]{
  \vertT{Tv}{v}{0,0}
  \vertT{Tw}{w}{2.6,0}
  \draw (Tv) to[bend left=22]  node[above, midway, elbl] {$s$}  (Tw);
  \draw (Tv) to[bend right=22] node[above, midway, elbl] {$s'$} (Tw);
  \Lstubs{Tv}
  \Rstubs{Tw}
  \node[glbl] at (1.3,-0.85) {$\cT$};

  \vertT{Yv}{v}{4.1,-0.5}
  \vertT{Yw}{w}{6.5,-0.5}
  \vertTl{Yv0}{v_0}{5.3,0}{below}
  \vertT{Yu}{u}{5.3,0.8}
  \draw (Yv)  -- node[above left,  pos=0.7, elbl] {$s_1$} (Yv0);
  \draw (Yv0) -- node[right,       pos=0.6, elbl] {$s_2$} (Yu);
  \draw (Yv0) -- node[above right, pos=0.3, elbl] {$s_3$} (Yw);
  \Lstubs{Yv}
  \Rstubs{Yw}
  \node[glbl] at (5.3,-0.85) {$\cT_Y$};

  \vertT{Pv}{v}{8.2,0}
  \vertT{Pw}{w}{11.1,0}
  \draw (Pv) -- node[above, midway, elbl, inner sep=4pt] {$\dfrac{ss'}{s+s'}$} (Pw);
  \Lstubs{Pv}
  \Rstubs{Pw}
  \node[glbl] at (9.65,-0.55) {$\cT'$};
}
$$

First, $t(\cT) \>=\> m_vm_w(s+s')\,t(\cT_Y)$. Indeed,
apply \ref{tamwt}(c) at $v_0$ in $\cT_Y$. The new mesh edges between
$(v,u), (v,w), (u,w)$ have lengths $a, s, b$, so together with the unchanged edges
of $\cT$ the result is $\cT$ with $e'$ subdivided at $u$. By the serial law (b),
this graph has Tamagawa number $t(\cT)$. The star-mesh factor
is $m_vm_w\,s_1s_2s_3\kappa^2 \>=\> m_vm_w(s+s')$.

Next, $t(\cT_Y) \>=\> t(\cT')$. Indeed,
$u$ is leaf (vertex of degree $1$) in $\cT_Y$, and (a) eliminates it
without changing $t$. After that, $v_0$ gets degree $2$ with neighbours $v, w$
at lengths $s_1, s_3$; the serial law (b) replaces this path by a single edge
of length $s_1\!+\!s_3 = ss'/(s+s')$, recovering~$\cT'$.

\noindent
\textit{(d)} Apply (c) to merge $e_1$ and $e_2$ into a single edge of
length $s/2$, giving a factor $2sm_vm_w$. Now, inductively, at step
$k$, merge the current edge of length $s/k$ with $e_{k+1}$ of length $s$,
giving length $s/(k+1)$ and factor $\frac{(k+1)s}{k}m_vm_w$. After $n-1$
steps the total factor is
$$
  \prod_{k=1}^{n-1} \frac{(k+1)s}{k}\,m_vm_w
  \>=\> (sm_vm_w)^{n-1}\prod_{k=1}^{n-1}\frac{k+1}{k}
  \>=\> n\,(sm_vm_w)^{n-1},
$$
and the surviving edge has length $s/n$, as required.

\noindent
\textit{(e)} This is \ref{tamwt}\,(c) for $n=3$.

\noindent
\textit{(f, g)} Clear from the definition of the Tamagawa number.
\end{proof}


\newpage

\begin{thebibliography}{99}

\bibitem{Alar}
D. C. Alar, J. Celaya, L. D. Garc\'\i a-Puente, M. Henson, A. K. Wheeler,
The sandpile group of a thick cycle graph,
Electron. J. Graph Theory Appl. 10 (2022), no.~2, 625--636.

\bibitem{Bacher}
R. Bacher, P. de la Harpe, T. Nagnibeda,
The lattice of integral flows and the lattice of integral cuts on a finite graph,
Bull. Soc. Math. France 125 (1997), no.~2, 167--198.

\bibitem{ABKS}
Y. An, M. Baker, G. Kuperberg, F. Shokrieh,
Canonical representatives for divisor classes on tropical curves
and the Matrix-Tree Theorem,
Forum Math. Sigma 2 (2014), e24.

\bibitem{BNi}
M. Baker, J. Nicaise,
Weight functions on Berkovich curves, Algebra and Number Theory 10:10 (2016), 2053--2079.

\bibitem{BNo}
M. Baker, S. Norine,
Riemann--Roch and Abel--Jacobi theory on a finite graph,
Adv. Math. 215 (2007), 766--788.

\bibitem{Betts}
A. Betts,
Tamagawa numbers of semistable abelian varieties over local fields,
Algebra and Number Theory 12 (2018), 1635--1671.

\bibitem{BL}
S. Bosch, Q. Liu,
Rational points of the component group of a N\'eron model,
Manuscripta Math. 98 (1999), 275--293.

\bibitem{BLR}
S. Bosch, W. L\"utkebohmert, M. Raynaud, N\'eron Models,
Ergebnisse der Mathematik und ihrer Grenzgebiete (3), Vol.~21, Springer-Verlag, Berlin, 1990.

\bibitem{Cassels}
J. W. S. Cassels, Rational Quadratic Forms, LMS Monographs 13, Academic Press, London, 1978.

\bibitem{CR}
T. Chinburg, R. Rumely, The capacity pairing, J. Reine Angew. Math. 434 (1993), 1--44.

\bibitem{CTS}
J.-L. Colliot-Th\'el\`ene, S. Saito,
Z\'ero-cycles sur les vari\'et\'es $p$-adiques et groupe de Brauer,
Internat. Math. Res. Notices 1996, no.~4, 151--160.

\bibitem{newton}
T. Dokchitser, Models of curves over discrete valuation rings, Duke Math. J. 170, no. 11 (2021), 2519--2574.

\bibitem{types}
T. Dokchitser, A classification of reduction types of curves, preprint, 2025, arXiv: 2512.09082.

\bibitem{HN}
L. H. Halle, J. Nicaise,
Motivic zeta functions of degenerating abelian varieties and the monodromy conjecture,
Adv. Math. 227 (2011), 610--653.

\bibitem{HNbase}
L. H. Halle, J. Nicaise,
N\'eron Models and Base Change, Lecture Notes in Mathematics 2156, Springer, 2016.

\bibitem{Kennelly}
A. E. Kennelly,
Equivalence of triangles and three-pointed stars in conducting networks,
Electrical World and Engineer 34 (1899), 413--414.

\bibitem{Kir47}
G. Kirchhoff, \"Uber die Aufl\"osung der Gleichungen, auf welche man bei der Untersuchung der linearen Verteilung galvanischer Str\"ome gef\"uhrt wird,
Ann. Phys. Chem. 72 (1847), 497--508.

\bibitem{Lo}
D. Lorenzini, Arithmetical graphs, Math. Ann. 285 (1989), 481--501.

\bibitem{LoComp}
D. Lorenzini, Groups of components of N\'eron models of Jacobians,
Compositio Math. 73 (1990), 145--160.

\bibitem{Lo08}
D. Lorenzini, Smith normal form and Laplacians, J. Comb. Theory, Ser. B, 98(6) (2008), 1271--1300.

\bibitem{MZ}
G. Mikhalkin, V. Zharkov,
Tropical curves, their Jacobians and theta functions,
in: Curves and Abelian Varieties,
Contemp. Math. 465, Amer. Math. Soc., Providence, RI, 2008, 203--230.

\bibitem{NSW}
J. Neukirch, A. Schmidt, K. Wingberg,
Cohomology of Number Fields, 2nd ed.,
Grundlehren der Mathematischen Wissenschaften 323,
Springer-Verlag, Berlin, 2008.

\bibitem{PS}
B. Poonen, M. Stoll,
The Cassels-Tate pairing on polarized abelian varieties,
Ann. of Math. (2) 150 (1999), 1109--1149.

\bibitem{Ray}
M. Raynaud, Sp\'ecialisation du foncteur de Picard,
Publ. Math. IHES 38 (1970), 27--76.

\bibitem{Padma}
P. Srinivasan, Invariants linked to models of curves over discrete valuation rings,
Ph.D. thesis, MIT, 2016.

\bibitem{Trees}
J.-P. Serre, Trees, translated from the French by J. Stillwell,
Springer-Verlag, Berlin--New York, 1980.

\bibitem{TatC}
J. Tate, On the conjectures of Birch and Swinnerton-Dyer and a
geometric analog, S\'eminaire Bourbaki, 18e ann\'ee, 1965/66, no. 306.

\bibitem{TatA}
J. Tate, Algorithm for determining the type of a singular fiber in
an elliptic pencil, in: Modular Functions of One Variable IV,
Lect. Notes in Math. 476, B. J. Birch and W. Kuyk, eds., Springer-Verlag,
Berlin, 1975, 33--52.

\end{thebibliography}
\end{document}